\DeclareMathOperator{\id}{id}
\renewcommand{\phi}{\varphi}
\renewcommand{\theta}{\vartheta}
\renewcommand{\epsilon}{\varepsilon}
\newcommand{\FF}{\mathbb{F}}
\newcommand{\NN}{\mathbb{N}}
\newcommand{\QQ}{\mathbb{Q}}
\newcommand{\cF}{\mathcal{F}}
\newcommand{\cI}{\mathcal{I}}
\newcommand{\cK}{\mathcal{K}}
\newcommand{\diff}{\mathrm{d}}
\newcommand{\dd}{\,\mathrm{d}}
\renewcommand{\theta}{\vartheta}
\newcommand{\1}{\mathbf{1}}
\newcommand*{\EX}[2][]{\E^{#1}\left [ #2 \right ]}
\newcommand*{\cEX}[3][]{\E^{#1}\left[ #2 \,\middle\vert\, #3 \right]}
\newcommand*{\PR}[2][\P]{{#1}\left [ #2 \right ]}
\newcommand*{\cPR}[3][\P]{{#1}\left [ #2 \;\middle\vert\; #3 \right ]}
\newcommand*{\as}[1]{#1\text{-a.s.}}
\newcommand*{\ol}[1]{\overline{#1}}
\newcommand*{\ul}[1]{\underline{#1}}
\newcommand{\R}{\mathbb R}
\newcommand{\E}{\mathbb E}
\newcommand{\N }{\mathbb N}
\newcommand{\q }{\mathbb Q}
\newcommand{\p }{\mathbb P}
\newcommand{\F }{\mathcal F}
\newcommand{\eps}{\varepsilon}
\renewcommand{\P}{\p}
\newtheorem{theorem}{Theorem}[section]
\newtheorem{corollary}[theorem]{Corollary}
\newtheorem{lemma}[theorem]{Lemma}
\newtheorem{proposition}[theorem]{Proposition}
\newtheorem{assumption}[theorem]{Assumption}
\theoremstyle{remark}
\newtheorem{remark}[theorem]{Remark}
\newtheorem{example}[theorem]{Example}
\newtheorem{definition}[theorem]{Definition}
\numberwithin{equation}{section}
\begin{document}
\title{Bubbles in discrete time models}

%
  \author{Martin Herdegen}
   \address{University of Warwick, United Kindom}
   \email{M.Herdegen@warwick.ac.uk}
   \author{D\"orte Kreher}
   \address{Humboldt-Universit\"at zu Berlin, Germany}
   \email{kreher@math.hu-berlin.de}

\begin{abstract}
	We introduce a new definition of bubbles in discrete-time models based on the discounted stock price losing mass at some finite drawdown under an equivalent martingale measure. We provide equivalent probabilistic characterisations of this definition and give examples of discrete-time martingales that are bubbles and those that are not.  In the Markovian case, we provide sufficient analytic conditions for the presence of bubbles. We also show that the existence of bubbles is directly linked to the existence of a non-trivial solution to a linear Volterra integral equation of the second kind involving the Markov kernel. Finally, we show that our definition of bubbles in discrete time is consistent with the strict local martingale definition of bubbles in continuous time in the sense that a properly discretised strict local martingale in continuous time is a bubble in discrete time.
\end{abstract}
	
 \subjclass[2010]{60G42, 60J10, 91G99, 45D05}

\keywords{Bubble, strict local martingale, Markov martingale, Volterra integral equation}
\thanks{We thank Johannes Ruf as well as Paolo Guasoni, the co-editor, and two anonymous referees for helpful comments on previous versions.}

\maketitle

Over the last 20 years, a mathematical theory of  bubbles for continuous time  models has been developed based on the concept of strict local martingales, cf.~the seminal papers by Loewenstein and Willard \cite{LoewensteinWillard}, Cox and Hobson \cite{CoxHobson}, Jarrrow, Protter and Shimbo \cite{JarrowProtter1,JarrowProtter2} as well as the survey article by Protter \cite{ProtterSurvey} and the references therein. In economic terms, an asset price bubble exists if the fundamental value of the asset deviates from its current price. If the fundamental value is understood to be the expectation of the (discounted) price process $S = (S_t)_{t \geq 0}$ under the equivalent local martingale measure $\p$, then the asset $S$ has a bubble if $\E^\p[S_T]<\E^\p[S_0]$ for some fixed time $T>0$, i.e., if $S$ is a strict local $\p$-martingale, that is a local $\P$-martingale that fails to be a $\P$-martingale.

While many implications and extensions of this definition have been discussed in the literature (see~e.g.~Ekstr\"om and Tysk \cite{EkstroemTysk}, Bayraktar, Kardaras and Xing \cite{Bayraktar},  Biagini, F\"ollmer and Nedelcu \cite{Biagini}, Herdegen and Schweizer \cite{HerdegenSchweizer}), the strict local martingale definition of  bubbles has no direct analogue in discrete time models. The reason for this is that a nonnegative local martingale $S = (S_k)_{k \in \NN_0}$ in discrete time with $S_0\in L^1$ is automatically a (true) martingale. Hence, a definition of bubbles based on strict local martingales is void. Also trying  to define a bubble in discrete time as a martingale $S = (S_k)_{k \in \NN_0}$ that is not uniformly integrable does not lead to a meaningful concept as this would imply that virtually all relevant models such as the standard binomial model (considered on an unbounded time horizon) are bubbles, which seems absurd.

Despite the above negative results, the goal of this paper is to introduce a new definition of  bubbles for discrete-time models on an unbounded time horizon -- keeping the standard assumption that the discounted stock price is a martingale. This definition has to satisfy at least two conditions.

\begin{itemize}
\item [(I)] It should \emph{split} martingales that are not uniformly integrable into two sufficiently rich classes: those that are  bubbles and those that are not. In particular, standard discrete-time models with i.i.d.~returns like the binomial model should not be  bubbles.
\item [(II)] It should be \emph{consistent} with the strict local martingale definition in continuous times in the sense that a properly discretised strict local martingale in continuous time is a  bubble in discrete time, and conversely, that a local martingale in continuous time is a strict local martingale if all properly discretised martingales are  bubbles in discrete time.
\end{itemize}

To the best of our knowledge, there has been no attempt in the extant literature to extend the martingale theory of  bubbles to discrete time. The only slight exception is Roch \cite{Roch} who introduced the notion of asymptotic asset price bubbles using the concept of weakly convergent discrete time models (“large financial market”). More precisely, he showed that even if the price process is a martingale in a sequence of weakly convergent discrete time models, it can have properties similar to a bubble in that the fundamental value in the asymptotic market can be lower than the current price in the asymptotic market. In contrast to \cite{Roch}, our approach is non-asymptotic. 

To motivate our definition of a  bubble in a discrete time model, consider a non-sophisticated investor who follows a simple buy-and-hold strategy to invest into an asset with (discounted) price process $S = (S_k)_{k \in \NN_0}$. The investor buys the asset at time $k=0$ and hopes that it will rise and rise. When the (discounted) asset price drops for the first time, the investor fears to lose money and sells the asset.
 Denoting by $\tau_1:=\inf\{j>0:\ S_j<S_{j-1}\}$ the time of the first drawdown of the asset, the fundamental value of $S$ at time $0$ (viewed with regard to the first drawdown) is $\E^\p[S_{\tau_1}]$, where $\P$ denotes an equivalent martingale measure. As the process $S$ is a nonnegative supermartingale, we always have $\E^\p[S_{\tau_1}] \leq S_0$. If $\E^\p[S_{\tau_1}] < S_0$, the fundamental value of the asset (viewed with regard to the first drawdown) is lower than its initial price and hence $S$ might be considered a  bubble. Indeed, if the market is complete, the predictable representation theorem implies that a sophisticated investor might choose a dynamic trading strategy $\theta$ whose (discounted) value process $V(\theta) = (V_k(\theta))_{k \in \NN_0}$ satisfies $V_0(\theta) = \E^\p[S_{\tau_1}] < S_0$ and $V_\tau(\theta) = S_\tau$.

Of course, the requirement that $S$ loses mass at the \emph{first} drawdown is somewhat arbitrary and unrealistic. For this reason, our precise definition of a  bubble in discrete time is more general and only requires $S$ to lose mass at the $k$-th drawdown for some $k \in \NN$. While this definition is very simple, it leads to a rich theory.

In Section \ref{ch:characterization}, we provide several equivalent probabilistic characterisations for a nonnegative discrete-time martingale to be a  bubble. We also provide necessary and sufficient  characterisations for a discrete-time martingale with independent increments to have a  bubble. In particular, we show that the standard binomial model does not have a  bubble, which implies that Condition (I) above is satisfied.

In Sections \ref{sec:Markov}, we look at the special case that $S$ is a Markov martingale. We provide characterisations for the presence or absence of  bubbles, depending on the \emph{probability} of going down, $a(x) = \p_x[S_1<x]$, and the \emph{relative recovery} when going down, $b(x) = \E_x[\tfrac{S_1}{x}\1_{\{S_1<x\}}]$. Loosely speaking, it turns out that  $S$ is a  bubble if and only if $b(x)$ converges to $0$ fast enough as $x \to \infty$. To make this precise, however, is quite involved. While we are able to give sufficient conditions in the general case, we provide necessary and sufficient conditions in the case of complete markets.

In Section \ref{ch:fixedpoint}, we continue our study of Markov martingales by looking more closely at the underlying Markov kernel. We show that the existence of  bubbles for $S$ is directly linked to the existence of a non-trivial nonnegative solution to a linear Volterra integral equation of the second kind involving the Markov kernel. Among others, this allows us to give some additional sufficient conditions for the existence of  bubbles that cannot be covered with the results from Section \ref{sec:Markov}.

Finally, in Section \ref{ch:relation}, we discuss how our definition of a  bubble in discrete time relates to the strict local martingale definition in continuous time. We show that when discretising a  positive continuous strict local martingale along sequences of stopping times in a certain somewhat canonical class, one obtains a  bubble in discrete time. Conversely, we show that a positive continuous local martingale is a strict local martingale if for all localising sequences in the same class, the corresponding discretised martingales are   bubbles. This shows that Condition (II) above is also satisfied.\footnote{As pointed out by one referee, our concept of a discrete-time bubble has a clear economic foundation which the strict local martingale definition of bubbles in continuous time maybe lacks -- at least at a first glance. Hence, by showing that appropriately discretised strict local martingales yield discrete-time bubbles, we provide additional and new support to modelling continuous-time bubbles by strict local martingales.} To prove these discretisation results we rely on the deep change of measure techniques first employed by Delbaen and Schachermayer \cite{DelbaenSchachermayer} and further developed by Pal and Protter \cite{PalProtter}, Kardaras, Kreher, and Nikeghbali \cite{KKN}, and Perkowski and Ruf \cite{PerkowskiRuf} that allow to turn the inverse of a nonnegative strict local martingale into a true martingale under a locally dominating probability measure. Some technical proofs of this section are shifted to the appendix.

\section{Definition and characterisation of bubbles}\label{ch:characterization}

In this section, we introduce our definition of a bubble in discrete time and provide equivalent probabilistic characterisations of this concept.

\begin{definition}\label{defbubble}
	Let $(\Omega, \cF, \FF = (\cF_k)_{k \in \NN_0},\P)$ be a filtered probability space. A nonnegative $(\P, \FF)$-martingale $S =(S_k)_{k\in\N_0}$ is called a \emph{bubble} if 
	\begin{equation}
		\label{eq:def:bubble}
		\EX{S_{\tau_{k}}} < \EX{S_0}
	\end{equation}
	for some $k\in\N$, where $\tau_0:=0$ and $\tau_k:=\inf\{j>\tau_{k-1}:\ S_j<S_{j-1}\},\ k\in\N$, denotes the $k$-th \emph{drawdown} of $S$. We also call $\P$ a \emph{bubble measure} for $S$.
\end{definition}

Some comments on the above definition are in order.
\begin{remark}
	(a) We include the possibility that $\P[\tau_k=\infty] > 0$ with positive probability. Since $S$ is a nonnegative martingale, it converges $\as{\P}$ to some integrable random variable $S_\infty$ by Doob's supermartingale convergence theorem. Hence, $S_{\tau_k}$ is well defined in any case.

	(b) If $S$ is a Markov process, $S$ will be a bubble if and only if $\E[S_{\tau_{1}}] < \E[S_0]$, cf.~Section \ref{sec:Markov} below. In general, however, the above definition does not contain any redundancy, thinking for example of dynamics with a change point. 
	
	(c) By the stopping theorem for uniformly integrable martingales, $S$ can only be a  bubble if it is {\it not} uniformly integrable. Example \ref{ex:non-bubble}, however, shows that not every martingale that fails to be uniformly integrable is a bubble. This can be compared to the strict local martingale definition in continuous time. Also, it emphasizes that the precise definition of the stopping time $\tau_k$ in \eqref{eq:def:bubble} is important and cannot naively be replaced by an arbitrary stopping time. 

(d) Our bubbles are strictly speaking \emph{$\p$-bubbles} since the definition depends on the choice of the (equivalent) martingale measure $\p$.  In incomplete markets, it is possible to have a \emph{$\p$-bubble} under some equivalent martingale measure (EMM) $\p$ but not a \emph{$\tilde \p$-bubble} under a different EMM $\tilde \p$. In this sense, our definition is not robust with respect to the choice of EMM. Note that exactly the same issue arises in the strict local martingale definition of bubbles in continuous time. Using similar ideas as in \cite{HerdegenSchweizer}, one could introduce the notion of a \emph{strong bubble} in discrete time. We leave the details of this to future work.
	
	(e) Definition \ref{defbubble} can be reformulated in the following way: For some $k \in \NN$, the stopped processes $S^{\tau_k}$ fails to be uniformly integrable. This reformulation allows to apply results from the literature that are proven for general c\`adl\`ag (local) martingales. For example,  Hulley and Ruf \cite{HulleyRuf} provide necessary and sufficient characterizations for a c\`adl\`ag (local) martingale to be uniformly integrable. However, the conditions given in \cite{HulleyRuf} require to calculate the distribution of the supremum of $S^{\tau_k}$ and the distribution of the jumps of $S^{\tau_k}$ at certain hitting times, both of which are difficult to get hold of. For this reason, they are not very useful in our discrete time setup.
\end{remark}

We proceed to give a first simple example of a bubble in a complete market model.

\begin{example}\label{ex:complete}
	Define the process $S = (S_k)_{k \in \NN_0}$ and the probability measure $\P$ recursively by $S_0 = s_0 > \tfrac{1}{2}$, and for $k \in \NN$,
	\begin{align*}
		&\cPR{S_k=2S_{k-1}-\frac{1}{2}}{S_{k-1}>\frac{1}{2}}=\cPR{S_k=\frac{1}{2}}{S_{k-1}>\frac{1}{2}}=\frac{1}{2} \\
&\cPR{S_k=\frac{1}{2}}{S_{k-1}=\frac{1}{2}}=1.
	\end{align*}
Then $S$ is a $\P$-martingale for its natural filtration, $\tau_1 < \infty$ $\as{\P}$, and $\E[S_{\tau_1}]=\frac{1}{2}<s_0= \E[S_0]$,
i.e., $S$ is a bubble.
\end{example}

The following result provides two equivalent characterisations of bubbles. The first one shows that a nonnegative martingale $S $ is a bubble if and only if there exists a deterministic time $k \in \NN_0$ such that $S$ loses mass at the first drawdown after $k$. The second one provides a limit characterisation. This second characterisation is particularly useful for checking whether or not a martingale $S$ is a bubble.

\begin{theorem}\label{thm:char:equivalent}
	Let $S = (S_k)_{k \in \NN_0}$ be a nonnegative martingale. For $k \in \NN_0$, define the stopping time $\tilde \tau_k$ by
	\begin{equation*}
		\tilde \tau_k := \inf\{j > k: S_j < S_{j-1}\}.
	\end{equation*}
	Then the following are equivalent:
	\begin{enumerate}
		\item $S$ is a bubble.
		\item There exists $k \geq 0 $ such that $\E[S_{\tilde \tau_k}] < \E[S_k]$.
		\item There exists $k \geq 0 $  such that $\lim_{n\to\infty}\E[(S_n-S_\infty)\1_{\{S_k\leq S_{k+1}\leq\dots\leq S_n\}}]>0$.
	\end{enumerate}
\end{theorem}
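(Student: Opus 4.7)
I propose to prove the cycle (b) $\Rightarrow$ (a), (a) $\Rightarrow$ (b), (b) $\Leftrightarrow$ (c), using optional sampling for the nonnegative supermartingale $S$ as the main tool. Doob's supermartingale convergence theorem ensures $S_n \to S_\infty$ $\P$-a.s.\ with $S_\infty \in L^1$, so that $S_{\tilde \tau_k}$ and $S_{\tau_k}$ are well defined on the events where these stopping times are infinite.

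For (b) $\Rightarrow$ (a) the key pointwise inequality is $\tilde \tau_k \leq \tau_{k+1}$: since $\tau_j \geq j$ for every $j$, at most $k$ drawdowns can occur by time $k$, hence $\tilde \tau_k = \tau_{N+1}$ for some random $N \leq k$. Optional sampling for the nonnegative supermartingale $S$ then gives $\E[S_{\tau_{k+1}}] \leq \E[S_{\tilde \tau_k}] < \E[S_k] = \E[S_0]$, which is (a) with index $k+1$.

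For (a) $\Rightarrow$ (b), let $k_0 \geq 1$ be the smallest integer with $\E[S_{\tau_{k_0}}] < \E[S_0]$. Since $\E[S_{\tau_k}] \leq \E[S_{\tau_{k-1}}]$ by optional sampling, minimality forces $\E[S_{\tau_{k_0-1}}] = \E[S_0]$. Observing that $\tau_{k_0} = \tilde \tau_m$ on $\{\tau_{k_0-1} = m\}$, I decompose
\begin{equation*}
0 < \E[S_{\tau_{k_0-1}} - S_{\tau_{k_0}}] = \sum_{m \geq k_0 - 1} \E\bigl[(S_m - S_{\tilde \tau_m})\1_{\{\tau_{k_0-1} = m\}}\bigr],
\end{equation*}
so at least one summand is strictly positive, say at $m = m^\ast$. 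The crucial ingredient is the supermartingale bound $S_m \geq \E[S_{\tilde \tau_m} \mid \F_m]$ $\P$-a.s., which implies $\E[(S_m - S_{\tilde \tau_m})\1_A] \geq 0$ for every $A \in \F_m$. Applying this with $A = \{\tau_{k_0-1} = m^\ast\}^c \in \F_{m^\ast}$ and adding to the strictly positive term yields $\E[S_{m^\ast} - S_{\tilde \tau_{m^\ast}}] > 0$, which is (b).

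For (b) $\Leftrightarrow$ (c), I use $\{S_k \leq S_{k+1} \leq \dots \leq S_n\} = \{\tilde \tau_k > n\}$. Optional sampling at the bounded stopping time $\tilde \tau_k \wedge n$ gives $\E[S_{\tilde \tau_k \wedge n}] = \E[S_k]$. Splitting on $\{\tilde \tau_k \leq n\}$ and $\{\tilde \tau_k > n\}$ and writing $S_n = (S_n - S_\infty) + S_\infty$ on the latter, this rearranges to
\begin{equation*}
\E\bigl[(S_n - S_\infty)\1_{\{\tilde \tau_k > n\}}\bigr] = \E[S_k] - \E\bigl[S_{\tilde \tau_k}\1_{\{\tilde \tau_k \leq n\}}\bigr] - \E\bigl[S_\infty \1_{\{\tilde \tau_k > n\}}\bigr].
\end{equation*}
Monotone convergence on the first right-hand term, dominated convergence (with dominator $S_\infty \in L^1$) on the second, and the convention $S_{\tilde \tau_k} = S_\infty$ on $\{\tilde \tau_k = \infty\}$ then give
\begin{equation*}
\lim_{n \to \infty} \E\bigl[(S_n - S_\infty)\1_{\{\tilde \tau_k > n\}}\bigr] = \E[S_k] - \E[S_{\tilde \tau_k}],
\end{equation*}
so positivity of the limit in (c) is equivalent to $\E[S_{\tilde \tau_k}] < \E[S_k]$, which is (b).

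The main obstacle I anticipate is the step (a) $\Rightarrow$ (b): the strictly positive mass deficit between two consecutive drawdown times is distributed across infinitely many deterministic indices via the random level $\tau_{k_0-1}$, and one must extract a \emph{single} deterministic $m$ for which the \emph{unconditional} expectation $\E[S_m - S_{\tilde \tau_m}]$ is strictly positive. The pointwise supermartingale bound $S_m \geq \E[S_{\tilde \tau_m} \mid \F_m]$ is precisely what is needed: it makes the complementary contribution on $\{\tau_{k_0-1} = m^\ast\}^c$ nonnegative, so a single strictly positive restricted term already forces the full expectation to be strictly positive.
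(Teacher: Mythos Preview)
Your proof is correct. The implication (b) $\Rightarrow$ (a) matches the paper exactly, and your argument for (b) $\Leftrightarrow$ (c) via optional sampling at $\tilde\tau_k \wedge n$ followed by monotone/dominated convergence establishes the same identity $\E[S_{\tilde\tau_k}] = \E[S_k] - \lim_{n\to\infty}\E[(S_n-S_\infty)\1_{\{\tilde\tau_k>n\}}]$ as the paper's telescoping computation, just by a slightly different route.

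The genuine difference lies in (a) $\Rightarrow$ (b). The paper argues by contrapositive: assuming $\E[S_{\tilde\tau_k}] = \E[S_k]$ for all $k$, each stopped process $S^{\tilde\tau_k}$ is uniformly integrable, and an induction on $\ell$ (decomposing over $\{\tau_{\ell-1}=k\}$ and applying the stopping theorem for the UI martingale $S^{\tilde\tau_k}$) gives $\E[S_{\tau_\ell}] = \E[S_0]$ for all $\ell$. Your argument is direct: you locate the first level $k_0$ at which mass is lost, decompose the positive deficit $\E[S_{\tau_{k_0-1}} - S_{\tau_{k_0}}]$ over the values of $\tau_{k_0-1}$, and then use the pointwise supermartingale bound $S_m \geq \E[S_{\tilde\tau_m}\mid\F_m]$ to upgrade a strictly positive \emph{restricted} term at some $m^\ast$ to a strictly positive \emph{unrestricted} expectation $\E[S_{m^\ast} - S_{\tilde\tau_{m^\ast}}]$. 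Your approach is constructive---it actually pinpoints an index $m^\ast$ rather than ruling out failure of (b) globally---and avoids invoking uniform integrability of the stopped processes. The paper's contrapositive induction, on the other hand, is structurally very clean and makes transparent why the equivalence works at every level simultaneously. Both arguments rest on the same two ingredients (the decomposition of $\tau_{k_0}$ over values of $\tau_{k_0-1}$ and the supermartingale optional sampling inequality), just assembled in opposite directions.
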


\begin{proof}
	(a) $\Rightarrow$ (b). Suppose that (b) is not true, i.e., $\E[S_{\tilde \tau_k}]=\E[S_k]$ for all $k\geq0$. Then for all $k\geq0$, the stopped process $S^{\tilde \tau_k}$ is a right-closed supermartingale which does not lose mass at $\infty$ and hence is uniformly integrable. We proceed by induction to show that $\E[S_{\tau_\ell}] =\E[S_0]$ for all $\ell \geq 0$, whence (a) is not true. The induction basis $\ell = 0$ is trivial. For the induction step, suppose that $\E[S_{\tau_{\ell-1}}]=\E[S_0]$ for some $\ell \geq 1$. Then using the stopping theorem for the uniformly integrable martingale $S^{\tilde \tau_k}$ in the third equality, we obtain
	\begin{align*}
		\E[S_{\tau_{\ell}}]&=\sum_{k=\ell-1}^\infty\E\left[S_{\tau_\ell}\1_{\{\tau_{\ell-1}=k\}}\right]+\E[S_\infty\1_{\{\tau_{\ell-1}=\infty\}}] \\
&=\sum_{k=\ell-1}^\infty\E\left[S_{\tilde \tau_k}\1_{\{\tau_{\ell-1}=k\}}\right]+\E[S_{\tau_{\ell-1}}\1_{\{\tau_{\ell-1}=\infty\}}]\\
		&=\sum_{k=\ell-1}^\infty\E\left[S_{\tau_{\ell-1}}\1_{\{\tau_{\ell-1}=k\}}\right]+\E[S_{\tau_{\ell-1}}\1_{\{\tau_{\ell-1}=\infty\}}]=\E[S_{\tau_{\ell-1}}]=\E[S_0].
	\end{align*}
	
	(b) $\Rightarrow$ (a). This follows from the fact that $\E[S_k] = \E[S_0]$ by the martingale property of $S$ together with the fact that $\tilde \tau_k \leq \tau_{k+1}$.
	
	(b) $\Leftrightarrow$ (c). The equivalence follows from the following calculation, which uses the martingale property of $S$ in the third equality:
	\begin{align}
		\EX{S_{\tilde \tau_k}} &=\lim_{n\to\infty}\sum_{\ell=k+1}^n\EX{S_\ell\1_{\{S_k\leq S_{k+1}\leq \dots\leq S_{\ell-1}>S_\ell\}}} + \EX{S_\infty\1_{\{S_k\leq S_{k+1}\leq \dots\}}}\notag \\
	&=\lim_{n\to\infty}\sum_{\ell=k+1}^n \left(\EX{S_\ell\1_{\{S_k\leq \dots\leq S_{\ell-1}\}}} - \EX{S_\ell\1_{\{S_k\leq \dots\leq S_{\ell}\}}} \right) \notag \\
&\qquad+ \EX{S_\infty\1_{\{S_k\leq S_{k+1}\leq \dots\}}}  \notag \\
	&=\lim_{n\to\infty}\sum_{\ell=k+1}^n \left(\EX{S_{\ell-1}\1_{\{S_k\leq \dots\leq S_{\ell-1}\}}} - \EX{S_\ell\1_{\{S_k \leq \dots\leq S_{\ell}\}}} \right) \notag \\
&\qquad + \EX{S_\infty\1_{\{S_k\leq S_{k+1}\leq \dots\}}}  \notag \\
		&= \EX{S_k} + \lim_{n\to\infty}\E[(S_\infty - S_n)\1_{\{S_k\leq S_{k+1}\leq\dots\leq S_n\}}]. \label{eq:pf:thm:char:equivalent}
\end{align}
	This finishes the proof.
\end{proof}

The characterisation (c) in Theorem \ref{thm:char:equivalent} is generally the most useful one  to decide whether or not $S$ is a bubble. The following corollary strengthens this characterisation. It shows directly that bounded martingales fail to be bubbles. (Of course, this follows directly from the fact that a bounded martingale is uniformly integrable.)

\begin{corollary}
	\label{cor:char:equivalent}
	Let $S = (S_k)_{k \in \NN_0}$ be a nonnegative martingale.
	Then the following are equivalent:
	\begin{enumerate}
		\item $S$ is a bubble.
		\item There exist $x \geq 0$ and $k \geq 0 $ such that $$\lim_{n\to\infty}\E[(S_n-S_\infty)\1_{\{x \leq S_k\leq S_{k+1}\leq\dots\leq S_n\}}]>0.$$
		\item For all $x \geq 0$, there exists $k \geq 0 $  such that $$\lim_{n\to\infty}\E[(S_n-S_\infty)\1_{\{x \leq S_k\leq S_{k+1}\leq\dots\leq S_n\}}]>0.$$
	\end{enumerate}
\end{corollary}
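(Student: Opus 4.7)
My plan is to prove $(c)\Rightarrow(b)$ (trivial), $(b)\Rightarrow(a)$, and $(a)\Rightarrow(c)$. The workhorse will be the telescoping computation from the proof of Theorem~\ref{thm:char:equivalent}, carried out with the extra $\F_k$-measurable factor $\1_{\{S_k \geq x\}}$ inserted throughout. Writing $A_n^k := \{S_k \leq S_{k+1} \leq \dots \leq S_n\}$ and
\begin{equation*}
L_k^x := \lim_{n \to \infty} \E[(S_n - S_\infty)\1_{\{x \leq S_k\} \cap A_n^k}],
\end{equation*}
the same telescoping should yield the closed form
\begin{equation*}
L_k^x = \E[(S_k - S_{\tilde\tau_k})\1_{\{S_k \geq x\}}].
\end{equation*}
Optional stopping on $\tilde\tau_k \wedge n$ together with conditional Fatou gives $\E[S_{\tilde\tau_k}\mid\F_k] \leq S_k$, which makes $L_k^x \geq 0$ and shows that $x \mapsto L_k^x$ is non-increasing for each fixed $k$. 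With this formula, $(b)\Rightarrow(a)$ is short: decomposing $L_k = L_k^x + \E[(S_k - S_{\tilde\tau_k})\1_{\{S_k < x\}}]$ and noting the second term is $\geq 0$ by the same inequality, $L_k \geq L_k^x > 0$, so Theorem~\ref{thm:char:equivalent}~(c) yields (a).

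For $(a)\Rightarrow(c)$, I would proceed in two steps. First, I would show that $L_k$ is non-decreasing in $k$. For $k < k'$, one has $\tilde\tau_{k'} = \tilde\tau_k$ on $\{\tilde\tau_k > k'\}$, while on $\{\tilde\tau_k \leq k'\}$ optional stopping on the bounded stopping time $\tilde\tau_k \wedge k'$ combined with $\E[S_{k'}] = \E[S_0]$ gives $\E[S_{\tilde\tau_k}\1_{\{\tilde\tau_k \leq k'\}}] = \E[S_{k'}\1_{\{\tilde\tau_k \leq k'\}}]$, and then $\E[S_{\tilde\tau_{k'}}\mid\F_{k'}] \leq S_{k'}$ yields $\E[S_{\tilde\tau_{k'}}] \leq \E[S_{\tilde\tau_k}]$, i.e.\ $L_{k'} \geq L_k$. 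With the $k_0$ provided by Theorem~\ref{thm:char:equivalent}, this gives $L_k \geq L_{k_0} > 0$ for all $k \geq k_0$.

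Second, for any $x$ that is a continuity point of the law of $S_\infty$, I would show $L_k - L_k^x \to 0$ as $k \to \infty$. By the closed form, $L_k - L_k^x = \E[S_k \1_{\{S_k < x\}}] - \E[S_{\tilde\tau_k}\1_{\{S_k < x\}}]$. The first term converges to $\E[S_\infty \1_{\{S_\infty < x\}}]$ by dominated convergence (the integrand is bounded by $x$). For the second term, $\tilde\tau_k \geq k+1 \to \infty$ forces $S_{\tilde\tau_k} \to S_\infty$ almost surely, and $\1_{\{S_k < x\}} \to \1_{\{S_\infty < x\}}$ almost surely by the continuity-point assumption; Fatou then gives $\liminf_k \E[S_{\tilde\tau_k}\1_{\{S_k < x\}}] \geq \E[S_\infty \1_{\{S_\infty < x\}}]$. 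Hence $\limsup_k (L_k - L_k^x) \leq 0$, and combined with $L_k - L_k^x \geq 0$ this forces convergence to $0$. Therefore $L_k^x > 0$ for all sufficiently large $k$. For a general $x \geq 0$ (possibly an atom), since $S_\infty$ has at most countably many atoms I can pick a continuity point $x' > x$; the previous step gives $L_k^{x'} > 0$ for some $k$, and monotonicity in $x$ from the first paragraph yields $L_k^x \geq L_k^{x'} > 0$. I expect the continuity-point limit to be the most delicate step, because no uniform dominating function is available for $S_{\tilde\tau_k}\1_{\{S_k < x\}}$, so only Fatou is applicable there, and the argument closes only because both one-sided limits equal the common value $\E[S_\infty \1_{\{S_\infty < x\}}]$.
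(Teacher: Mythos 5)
Your proof is correct, but your route to (a) $\Rightarrow$ (c) is genuinely different from the paper's. The paper fixes $x$, picks $\ell$ with $\E[S_{\tilde\tau_\ell}]<\E[S_0]$, and localises at the first hitting time $\sigma_{\ell,x}$ of $[x,\infty)$ after $\ell$: the stopped process $S^{\sigma_{\ell,x}}$ is uniformly integrable, so no mass is lost before $\sigma_{\ell,x}$, while the first drawdown after $\sigma_{\ell,x}$ dominates $\tilde\tau_\ell$ and hence still loses mass; a deterministic $k$ is then extracted by decomposing over the countably many values of $\sigma_{\ell,x}$ and using $S_k\geq x$ on $\{\sigma_{\ell,x}=k\}$. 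You instead work with the closed form $L_k^x=\E[(S_k-S_{\tilde\tau_k})\1_{\{S_k\geq x\}}]$ (which the telescoping does deliver, since $\1_{\{S_k\geq x\}}\1_{\{S_k\leq\cdots\leq S_{\ell-1}\}}$ is $\F_{\ell-1}$-measurable), prove that the total default $L_k=\E[S_0]-\E[S_{\tilde\tau_k}]$ is nondecreasing in $k$, and show that the contribution $\E[(S_k-S_{\tilde\tau_k})\1_{\{S_k<x\}}]$ from below level $x$ vanishes as $k\to\infty$, via dominated convergence and Fatou at continuity points of the law of $S_\infty$ and a monotonicity-in-$x$ reduction for atoms. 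All the individual steps check out: the conditional optional-stopping inequality $\E[S_{\tilde\tau_k}\mid\F_k]\leq S_k$ makes $(S_k-\E[S_{\tilde\tau_k}\mid\F_k])\1_{\{S_k\geq x\}}$ pointwise nonnegative and monotone in $x$, and $S_{\tilde\tau_k}\to S_\infty$ a.s.\ because $\tilde\tau_k$ is nondecreasing in $k$ and either eventually infinite (where $S_{\tilde\tau_k}=S_\infty$ by convention) or tends to infinity along the convergent path. What each approach buys: the paper's argument identifies the relevant $k$ concretely (a value of the hitting time of $[x,\infty)$) and needs no regularity of the law of $S_\infty$; yours proves the formally stronger statement that $L_k^x>0$ for \emph{all} sufficiently large $k$, at the cost of the detour through continuity points and the monotonicity lemma for $L_k$.
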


\begin{proof}
	It is clear that (c) implies (b), and (b) a fortiori implies condition (c) of Theorem \ref{thm:char:equivalent}, which by Theorem \ref{thm:char:equivalent} gives (a). So it remains to prove (a) $\Rightarrow$ (c).
	So fix $x \geq 0$. Since $S$ is a bubble, by Theorem \ref{thm:char:equivalent}(b) and the martingale property of $S$, there exists $\ell\geq 0$ such that $\E[S_{\tau^\ell_1}] < \E[S_\ell] = \E[S_0]$.
	
	Define the stopping times $\sigma_{\ell,x}$ and $\tau^{\sigma_{\ell,x}}_1$ by
	\begin{align*}
		\sigma_{\ell,x} := \inf\{k \geq \ell : S_k \geq x\} \quad \text{and} \quad \tau^{\sigma_{\ell,x}}_1 := \inf\{j > \sigma_{\ell,x}: S_j < S_{j-1}\}.
	\end{align*}
Then $\sigma_{\ell,x}$ is the first hitting time of $[x, \infty)$ after $\ell$ and $\tau^{\sigma_{\ell,x}}_1$ the first drawdown of $S$ after $\sigma_{\ell,x}$. Since $\sigma_{\ell,x} \geq  \ell$, it follows that $\tau^{\sigma_{\ell,x}}_1 \geq \tau^\ell_1$.

By the definition of $\sigma_{\ell,x}$, the stopped process $S^{\sigma_{\ell,x}}$ is uniformly integrable because 
	$$S^{\sigma_{\ell,x}}_k \leq \max(S_0, \ldots, S_\ell, S_{\sigma_{\ell,x}}, x)\in L^1.$$
	This implies that $\E[S_{\sigma_{\ell,x}}] = \E[S_0]$.
	Since $\tau^{\sigma_{\ell,x}}_1 \geq \tau_1^\ell$, this  in turn implies that
	\begin{equation*}
		\EX{S_{\tau^{\sigma_{\ell,x}}_1}} \leq \EX{S_{\tau^\ell_1}} < \EX{S_0} = \EX{S_{\sigma_{\ell,x}}}.
	\end{equation*}
	A similar calculation as in \eqref{eq:pf:thm:char:equivalent} shows that 
	\begin{equation*}
		\E\big[S_{\tau^{\sigma_{\ell,x}}_1}\big]  = \EX{S_{\sigma_{\ell,x}}} + \lim_{n\to\infty}\E\Big[(S_\infty - S_{\sigma_{\ell,x}+n})\1_{\{S_{\sigma_{\ell,x}}\leq\dots\leq S_{\sigma_{\ell,x} + n}\}}\1_{\{\sigma_{\ell,x} < \infty\}}\Big].
	\end{equation*}
	This together with the tower property of conditional expectations and dominated convergence yields
	\begin{equation*}
		\EX{\lim_{n\to\infty} \cEX{(S_{\sigma_{\ell,x}+n }- S_\infty)\1_{\{S_{\sigma_{\ell,x}}\leq S_{\sigma_{\ell,x} +1}\leq\dots\leq S_{\sigma_{\ell,x} + n}\}}}{\cF_{\sigma_{\ell,x}}} \1_{\{\sigma_{\ell,x} < \infty\}}} > 0.
	\end{equation*}
	We may deduce that there is $k \geq \ell$ such that
	\begin{equation*}
		\EX{\lim_{n\to\infty} \cEX{(S_{k +n} - S_\infty)\1_{\{S_{k}\leq S_{k+1}\leq\dots\leq S_{k + n}\}}\1_{\{\sigma_{\ell,x} = k\}}}{\cF_k} } > 0.
	\end{equation*}
	Using that $S_k \geq x$ on $\{\sigma_{\ell,x} = k\}$, this implies that 
	\begin{equation*}
		\EX{\lim_{n\to\infty} \cEX{(S_{k +n} - S_\infty)\1_{\{x \leq S_{k}\leq S_{k+1}\leq\dots\leq S_{k + n}\}}}{\cF_k} } > 0.
	\end{equation*}
	Now dominated convergence and the tower property of conditional expectations give (c).
\end{proof}

While characterisations (b) or (c) in Corollary \ref{cor:char:equivalent} are an improvement of Theorem \ref{thm:char:equivalent}(c), they still depend on $S_\infty$, of which we generally do not have a good knowledge. The following corollary provides a mild condition on $S$ under which the  bubble behaviour of $S$ can be characterised without involving $S_\infty$.

\begin{proposition}
	\label{prop:BC}
	Let $S = (S_k)_{k \in \NN_0}$ be a nonnegative martingale and $x  > 0$. Suppose that
	\begin{equation}
		\label{eq:prop:BC}
		\sum_{k = 0}^\infty \cPR{S_k < x\text{ or } S_k > S_{k+1}}{\cF_k}  = \infty \;\; \as{\P}
	\end{equation}
	Then for each $k \in \NN_0$,
		\begin{equation}
			\label{eq:prop:BC:infty}
		\lim_{n\to\infty}\EX{S_\infty \1_{\{x \leq S_k\leq S_{k+1}\leq\dots\leq S_n\}}}  = 0.
	\end{equation}
	In particular, $S$ is a bubble if and only if there exists $k \in \NN_0$ such that
	\begin{equation*}
		\lim_{n\to\infty}\E[S_n\1_{\{x \leq S_k\leq S_{k+1}\leq\dots\leq S_n\}}] > 0.
	\end{equation*}
\end{proposition}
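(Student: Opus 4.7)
My plan is to derive \eqref{eq:prop:BC:infty} from a conditional Borel--Cantelli argument combined with dominated convergence, and then read off the bubble characterisation as an easy consequence of Corollary \ref{cor:char:equivalent}.

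Set $A_n := \{x \leq S_k \leq S_{k+1} \leq \dots \leq S_n\}$ for $n \geq k$ and $B_j := \{S_j < x\} \cup \{S_j > S_{j+1}\}$ for $j \geq 0$. First I would observe that the sequence $(A_n)_{n \geq k}$ is decreasing in $n$, and that on its intersection $A_\infty$ the process $(S_j)_{j \geq k}$ stays at or above $x$ and is nondecreasing; in particular no $B_j$ with $j \geq k$ can occur on $A_\infty$. Hence $A_\infty \subseteq \{B_j \text{ happens only finitely often}\}$.

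Next I would apply the L\'evy extension of the Borel--Cantelli lemma to the shifted filtration $\cG_j := \cF_{j+1}$. Each $B_j$ lies in $\cF_{j+1} = \cG_j$, and $\cG_{j-1} = \cF_j$, so assumption \eqref{eq:prop:BC} reads exactly $\sum_{j} \cPR{B_j}{\cG_{j-1}} = \infty$ $\as{\P}$. The L\'evy extension then forces $\PR{B_j \text{ i.o.}} = 1$, and combined with the above inclusion yields $\PR{A_\infty} = 0$. Since $S$ is a nonnegative martingale, Doob's supermartingale convergence theorem supplies an integrable a.s. limit $S_\infty$, and dominated convergence with the $L^1$-dominant $S_\infty$ now gives $\E[S_\infty \1_{A_n}] \to \E[S_\infty \1_{A_\infty}] = 0$, which is \eqref{eq:prop:BC:infty}.

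For the characterisation of bubbles, I would split $\E[S_n \1_{A_n}] = \E[(S_n - S_\infty)\1_{A_n}] + \E[S_\infty \1_{A_n}]$. The second summand tends to $0$ by what has just been established, while the limit of the first summand exists by the same telescoping identity that produced \eqref{eq:pf:thm:char:equivalent} (run with the extra $\cF_k$-measurable factor $\1_{\{S_k \geq x\}}$ kept in every indicator). Since both terms converge, Corollary \ref{cor:char:equivalent}(c) then translates directly into the stated equivalence. The main subtle point is setting up the conditional Borel--Cantelli correctly, because $B_j$ depends on both $S_j$ and $S_{j+1}$ and is therefore $\cF_{j+1}$-measurable rather than $\cF_j$-measurable; once the filtration is shifted by one step, the remainder of the argument is routine.
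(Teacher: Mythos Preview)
Your proof is correct and follows essentially the same route as the paper: both apply the conditional Borel--Cantelli lemma to the events $B_j=\{S_j<x\}\cup\{S_j>S_{j+1}\}$ to conclude that $A_\infty=\{x\le S_k\le S_{k+1}\le\cdots\}$ is $\P$-null, and then invoke Corollary~\ref{cor:char:equivalent}. Your version is in fact slightly more explicit---you spell out the filtration shift $\cG_j=\cF_{j+1}$ needed because $B_j\in\cF_{j+1}$, and you name dominated convergence for \eqref{eq:prop:BC:infty}---whereas the paper states the Borel--Cantelli conclusion in one line and leaves these points implicit.
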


\begin{proof}
The conditional Lemma of Borel--Cantelli implies that
	\begin{equation*}
		\PR{\liminf_{k \to \infty} \{x  \leq S_k \leq S_{k+1}\}} = 1- \PR{\limsup_{k \to \infty} \left(\{S_k < x\} \cup \{S_k > S_{k+1}\}\right)}=0.
	\end{equation*}
	This implies a fortiori that $\{x  \leq S_k \leq S_{k +1} \leq  \cdots \}$ is a $\P$-null set for each $k \in \NN_0$. This gives \eqref{eq:prop:BC:infty}. The final claim now follows from Corollary \ref{cor:char:equivalent}.
\end{proof}

The following result gives simple necessary and sufficient conditions for a nonnegative martingales with independent increments to be a bubble.

\begin{theorem}
	\label{thmr:indep:incr}
	Let $(X_k)_{k \in \NN}$ be an independent sequence of nonnegative random variables with $\E[X_k] = 1$ for $k \in \NN$. Define the process $S = (S_k)_{k \in \NN_0}$ by $S_k=\prod_{\ell =1}^k X_\ell$ and the filtration $\FF = (\cF_k)_{k \in \NN_0}$ by $\cF_k := \sigma(S_0, \ldots, S_k)$. Moreover, for $k \in \NN$ set
	\begin{equation*}
		a_k := \P[X_k <1] \in [0, 1), \quad b_k:=\EX{X_k\1_{\{X_k < 1\}}} \in [0, a_k].
	\end{equation*}
	Then $S$ is a positive $(\P, \FF)$-martingale.  It is a bubble if and only if
	\begin{equation*}
		\sum_{k =1}^\infty a_k =\infty \quad \text{and} \quad \sum_{k =1}^\infty b_k < \infty.
	\end{equation*}
\end{theorem}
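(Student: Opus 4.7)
The plan is to apply characterisation (b) of Theorem \ref{thm:char:equivalent}, which says that $S$ is a bubble if and only if $\E[S_{\tilde \tau_k}] < \E[S_k]$ for some $k \geq 0$, and to compute $\E[S_{\tilde \tau_k}]$ in closed form by exploiting independence of the $X_j$. Observe that $\tilde \tau_k = \inf\{j > k : X_j < 1\}$, that the events $\{X_j \geq 1\}$ for $j > k$ are mutually independent and independent of $\cF_k$ with probabilities $1 - a_j$, and that the martingale property $\E[X_j] = 1$ gives $\E[X_j \1_{\{X_j \geq 1\}}] = 1 - b_j$ and $\E[X_j \1_{\{X_j < 1\}}] = b_j$.

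The main computation is the contribution from $\{\tilde \tau_k < \infty\}$. On $\{\tilde \tau_k = k + m\}$ one has $X_{k+1} \geq 1, \ldots, X_{k+m-1} \geq 1$ and $X_{k+m} < 1$, so by independence
\begin{equation*}
	\E[S_{\tilde \tau_k} \1_{\{\tilde \tau_k = k+m\}}] = (1 - b_{k+1}) \cdots (1 - b_{k+m-1})\, b_{k+m}.
\end{equation*}
Setting $P_m := \prod_{j=1}^m (1 - b_{k+j})$ with $P_0 := 1$, the telescoping identity $P_{m-1} b_{k+m} = P_{m-1} - P_m$ lets me sum over $m \geq 1$ to obtain
\begin{equation*}
	\E[S_{\tilde \tau_k} \1_{\{\tilde \tau_k < \infty\}}] = 1 - \prod_{j > k}(1 - b_j).
\end{equation*}

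To conclude I would split according to the Borel--Cantelli dichotomy for the independent events $\{X_j < 1\}$. If $\sum_j a_j = \infty$, then $\tilde \tau_k < \infty$ a.s., so $\E[S_{\tilde \tau_k}] = 1 - \prod_{j > k}(1 - b_j)$, which is strictly less than $\E[S_k] = 1$ for some $k$ if and only if $\sum_j b_j < \infty$; combined with Theorem \ref{thm:char:equivalent}(b) this gives the desired characterisation. If instead $\sum_j a_j < \infty$, then $\P[\tilde \tau_k = \infty] = \prod_{j > k}(1 - a_j) > 0$, and the task reduces to verifying that $\E[S_\infty \1_{\{\tilde \tau_k = \infty\}}] = \prod_{j > k}(1 - b_j)$, so that adding the two contributions gives $\E[S_{\tilde \tau_k}] = 1 = \E[S_k]$ for every $k$ and hence no bubble. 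For this I would use the decomposition $\1_{\{\tilde \tau_k = \infty\}} = \1_{\{X_{k+1} \geq 1, \ldots, X_n \geq 1\}} \cdot \1_{\{X_j \geq 1 \text{ for all } j > n\}}$ together with independence of $\cF_n$ from the tail to get $\E[S_n \1_{\{\tilde \tau_k = \infty\}}] = \prod_{j=k+1}^n (1 - b_j) \cdot \prod_{j > n}(1 - a_j)$, and then let $n \to \infty$ using monotone convergence (valid because $S_n$ is nondecreasing on $\{\tilde \tau_k = \infty\}$) together with $\prod_{j > n}(1 - a_j) \to 1$. This final step, which needs the right indicator decomposition combined with monotone convergence for $S_n$ restricted to the limit event, is the main subtlety of the argument.
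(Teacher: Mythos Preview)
Your argument is correct and takes a genuinely different route from the paper. The paper splits the analysis via two external tools: for $\sum_k a_k < \infty$ it invokes Kakutani's theorem to conclude that $S$ is uniformly integrable (hence not a bubble), and for $\sum_k a_k = \infty$ it uses Proposition~\ref{prop:BC} (via the conditional Borel--Cantelli lemma) to reduce to computing $\lim_n \E[S_n \1_{\{1 \leq S_k \leq \cdots \leq S_n\}}] = \P[S_k \geq 1]\prod_{\ell > k}(1-b_\ell)$, thereby avoiding any contact with $S_\infty$. You instead work directly with characterisation~(b) of Theorem~\ref{thm:char:equivalent} and compute $\E[S_{\tilde\tau_k}]$ in closed form in both regimes; the telescoping sum handles the contribution on $\{\tilde\tau_k < \infty\}$, and your monotone-convergence argument on $\{\tilde\tau_k = \infty\}$ (where $S_n$ is genuinely nondecreasing) handles the remainder when $\sum_k a_k < \infty$. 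Your approach is more self-contained and unified---no Kakutani, no Proposition~\ref{prop:BC}---at the cost of having to deal explicitly with $S_\infty$ on the tail event; the paper's approach is more modular but relies on heavier machinery. Both yield the same dichotomy $\E[S_{\tilde\tau_k}] = 1 - \prod_{j>k}(1-b_j)$ (respectively $=1$) in the two cases.
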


\begin{proof}
	It is clear by construction that $S$ is a nonnegative $(\P, \FF)$-martingale. 
	
	First, we argue that if $\sum_{k =1}^\infty a_k < \infty$, then $S$ is uniformly integrable and hence cannot be a  bubble. To this end, note that
	\[\infty>\sum_{k=0}^\infty a_k=\sum_{k=0}^\infty\p[X_k<1]\geq \sum_{k=0}^\infty \E\big[(1-\sqrt{X_k})\1_{\{X_k<1\}}\big]\geq \sum_{k=0}^\infty \E\big[(1-\sqrt{X_k})\big].\]
By Kakutani's theorem (see, e.g.~\cite[Theorem 14.12(v)]{Williams}) this implies that $S$ is uniformly integrable.

	Next, if $\sum_{k =1}^\infty a_k = \infty$, by independence of the $X_k$,
	\begin{equation*}
		\sum_{k = 0}^\infty \cPR{S_k < 1\text{ or } S_k > S_{k+1}}{\cF_k} \geq  \sum_{k = 0}^\infty \cPR{X_{k+1} < 1}{\cF_k} = \sum_{k =1}^\infty a_k = \infty.
	\end{equation*}
	This together with Corollary \ref{cor:char:equivalent} implies that $S$ is a bubble if and only if there exists $k \geq 0$ such that 
	\begin{align*}
		\lim_{n\to\infty}\E[S_n\1_{\{1 \leq S_k\leq S_{k+1}\leq\dots\leq S_n\}}] &=  \P[ S_k \geq 1] \prod_{\ell =k+1}^\infty   \EX{X_k\1_{\{X_k \geq 1\}}} \\
&= P[ S_k \geq \epsilon] \prod_{\ell =k+1}^\infty (1 - b_\ell)  > 0.
	\end{align*}
	Since $\P[ S_k \geq 1] > 0$ and $(1- b_\ell) > 0$ for all $\ell \in \NN$, this is equivalent to $\prod_{\ell =1}^\infty (1 - b_\ell) > 0$, which in turn is equivalent to $\sum_{k =1}^\infty b_k < \infty$.
\end{proof}

We illustrate the above theorem by two examples. The first one gives  bubble in a time-dependent binomial-type model, where the downward jumps get more and more severe. 
\begin{example}
	\label{ex:bubble}
	Let $(X_k)_{k \in \NN}$ be a sequence of independent random variables with $P[X_k = \frac{1}{k}] = \frac{1}{k}$ and $P[X_k = 1 + \frac{1}{k}] = 1- \frac{1}{k}$. Define the process $S = (S_k)_{k \in \NN_0}$ by $S_k:=\prod_{\ell =1}^k X_\ell$  and the filtration $\FF = (\cF_k)_{k \in \NN_0}$ by ${\cF_k := \sigma(S_0, \ldots, S_k)}$. Then $a_k = \frac{1}{k}$ and $b_k = \frac{1}{k^2}$. Hence, $\sum_{k =1}^\infty a_k =\infty$ and $\sum_{k =1}^\infty b_k < \infty$, whence $S$ is a  bubble.
\end{example}

The second example shows that a martingale with i.i.d.~returns is never a bubble. In particular an standard binomial model is never a bubble, which coincides with our intuition.
\begin{example}
	\label{ex:non-bubble}
	Let $(X_k)_{k \in \NN}$ be a sequence of i.i.d.~random variables that are nonnegative and satisfy $\E[X_k] = 1$ and $\P[X_k \neq 1] > 0$. Define the process $S = (S_k)_{k \in \NN_0}$ by $S_k:=\prod_{\ell =1}^k X_\ell$ and the filtration $\FF = (\cF_k)_{k \in \NN_0}$ by ${\cF_k := \sigma(S_0, \ldots, S_k)}$. Then $S$ is a martingale but fails to be uniformly integrable because $S_n=\exp\left(\sum_{k=1}^n\log(X_k)\right)\stackrel{a.s.}{\longrightarrow}0$ by the strong law of large numbers since $\E[\log(X_k)] < 1$. However, setting $b_k := \E[X_k\1_{\{X_k < 1\}}] = b > 0$, we obtain
	\begin{equation*}
		\sum_{k = 1}^\infty b_k = \infty.
	\end{equation*}
	Thus, $S$ fails to be a bubble.
\end{example}


\section{Characterization of bubble measures for Markov chains}\label{sec:Markov}

Throughout this section, we suppose that $S=(S_k)_{k \in \NN_0}$ is a positive Markov martingale with transition kernel $K:(0,\infty)\times\mathcal{B}_{(0,\infty)} \to [0,\infty)$, starting from $S_0=x>0$. Our goal is to determine under which conditions on the kernel $K(x,  \dd y)$ the measure $\p_{x}$ is a bubble measure.  To this end, we define the functions $a, b: (0, \infty) \to [0, 1)$ by
\begin{align}
	a(x) &:= \P_x\left[S_1  <  x\right] = \int_{[0, x)} K(x, \diff y), \label{eq:Markov:a}\\
	b(x) &:= \E_x\left[\frac{S_1}{x} \1_{\{S_1 < x\}}\right]= \int_{[0, x)} \frac{y}{x} K(x, \diff y). \label{eq:Markov:b}
\end{align}
Here, $a(x)$ denotes the \emph{probability} of a downward jump and $b(x)$ the \emph{relative recovery} in case of a downward jump.

First, we show we show that $S$ cannot be a bubble unless the relative recovery function $b$ converges to zero at infinity.

\begin{proposition}
	\label{prop:Markov:non bubble}
Assume that $\liminf_{x \to \infty} b(x) > 0$. Then $S$ fails to be a bubble under $\P_{x}$ for any $x \in (0, \infty)$.
\end{proposition}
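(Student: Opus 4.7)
The plan is to exhibit a threshold $y_0 > 0$ for which condition (c) of Corollary~\ref{cor:char:equivalent} fails, i.e., such that for every $k \in \NN_0$ the limit $\lim_{n\to\infty}\E_x[(S_n - S_\infty)\1_{\{y_0 \leq S_k \leq \dots \leq S_n\}}]$ exists and is nonpositive. By the equivalence of (a) and (c) in that corollary, this will imply that $S$ is not a bubble under $\P_x$ for any $x \in (0, \infty)$. The key identity, combining the martingale property with the definition of $b$, is
\[
\E_y\bigl[S_1 \1_{\{S_1 \geq y\}}\bigr] = \E_y[S_1] - \E_y\bigl[S_1 \1_{\{S_1 < y\}}\bigr] = y\bigl(1 - b(y)\bigr), \qquad y > 0.
\]

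Fix $x \in (0, \infty)$. By assumption I can choose $\epsilon \in (0, 1)$ and $y_0 > 0$ with $b(y) \geq \epsilon$ for every $y \geq y_0$. Fix $k \in \NN_0$ and, for $n \geq k$, set $B_n := \{y_0 \leq S_k \leq S_{k+1} \leq \dots \leq S_n\} \in \cF_n$; this sequence is decreasing in $n$. Since $B_n \in \cF_n$, conditioning on $\cF_n$ and invoking the Markov property together with the identity above yields
\begin{align*}
\E_x\bigl[S_{n+1} \1_{B_{n+1}}\bigr]
&= \E_x\bigl[\1_{B_n}\,\E\bigl[S_{n+1}\1_{\{S_{n+1}\geq S_n\}}\,\big|\,\cF_n\bigr]\bigr] \\
&= \E_x\bigl[\1_{B_n}\, S_n\bigl(1 - b(S_n)\bigr)\bigr].
\end{align*}
On $B_n$ we have $S_n \geq y_0$, hence $1 - b(S_n) \leq 1 - \epsilon$, so $\E_x[S_{n+1}\1_{B_{n+1}}] \leq (1-\epsilon)\E_x[S_n\1_{B_n}]$. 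Iterating from $n = k$ and using $\E_x[S_k\1_{B_k}] \leq \E_x[S_k] = x$ gives $\E_x[S_n \1_{B_n}] \leq (1-\epsilon)^{n-k} x \to 0$.

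Since $S_\infty \in L^1$ by Doob's supermartingale convergence theorem and $B_n \downarrow B_\infty$, dominated convergence gives $\E_x[S_\infty\1_{B_n}] \to \E_x[S_\infty \1_{B_\infty}] \geq 0$. Combining the two pieces,
\[
\lim_{n\to\infty}\E_x\bigl[(S_n - S_\infty)\1_{B_n}\bigr] = -\E_x\bigl[S_\infty \1_{B_\infty}\bigr] \leq 0.
\]
As this holds for every $k \geq 0$ at the fixed threshold $y_0$, condition (c) of Corollary~\ref{cor:char:equivalent} fails, and hence $S$ is not a bubble under $\P_x$. The only delicate step is the combined Markov/martingale conditioning that produces the contractive factor $1 - b(S_n) \leq 1 - \epsilon$; no deeper obstacle is expected.
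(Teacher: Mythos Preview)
Your proof is correct and follows essentially the same route as the paper: choose a threshold $y_0$ with $b(y)\geq\epsilon$ for $y\geq y_0$, use the Markov property together with the identity $\E_y[S_1\1_{\{S_1\geq y\}}]=y(1-b(y))$ to obtain the contractive recursion $\E_x[S_{n+1}\1_{B_{n+1}}]\leq(1-\epsilon)\E_x[S_n\1_{B_n}]$, and conclude via Corollary~\ref{cor:char:equivalent}. The only cosmetic difference is that the paper dismisses the $S_\infty$ term in one line by nonnegativity, whereas you compute its limit explicitly via dominated convergence.
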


\begin{proof}
	There exists $x_0 > 0$ and $\epsilon \in(0,1)$ such that $b(x) \geq \epsilon$ for all $x \geq x_0$. Pick $x \in (0, \infty)$. By Corollary \ref{cor:char:equivalent} and using that $S_\infty \geq 0$, it suffices to show that for each $k \in \NN_0$,
	\begin{equation*}
		\lim_{n \to \infty} \E_{x} \left[S_n \1_{\{x_0 \leq S_k \leq S_{k+1} \leq \cdots \leq S_n\}}\right] = 0.
	\end{equation*}
	So let $k < n$. Then by the Markov property and the definition of $b$,
	\begin{align}
		&\E_{x} \left[S_n \1_{\{x_0 \leq S_k \leq S_{k+1} \leq \cdots \leq S_n\}}\right] \notag \\
&\quad = \E_{x} \left[ \E_{x} \left[\frac{S_n}{S_{n-1}} \1_{\{S_n \geq S_{n-1}\}}\,\middle |\, \cF_{n-1} \right] S_{n-1} \1_{\{x_0 \leq S_k \leq S_{k+1} \leq \cdots \leq S_{n-1}\}}\right] \notag \\
		&\quad= \E_{x} \left[ (1- b(S_{n-1})) S_{n-1} \1_{\{x_0 \leq S_k \leq S_{k+1} \leq \cdots \leq S_{n-1}\}}\right] \notag \\
		&\quad\leq (1- \epsilon) \E_{x} \left[S_{n-1} \1_{\{x_0 \leq S_k \leq S_{k+1} \leq \cdots \leq S_{n-1}\}}\right] \notag \\
		&\quad\leq (1- \epsilon)^{n-k} \E_{x}\big[S_k\1_{\{x_0 \leq S_k\}}\big] \leq  (1- \epsilon)^{n-k}x.
	\end{align}
	Now the claim follows by letting $n \to \infty$.
\end{proof}

We proceed to formulate a mild condition on the function $a$, which allows us to characterize the bubble behaviour of $S$  without involving $S_\infty$.

\begin{assumption}\label{ass:lowerbound:a}
There exists $x_a > 0$ such that  $\inf_{x \in [x_a, y]} a(x) > 0$ for all $y > x_a$.  
\end{assumption}

\begin{remark}
	\label{rem:lowerbound:a}
	Assumption \ref{ass:lowerbound:a} is in particular fulfilled if $a$ is positive and lower semi-continuous.
\end{remark}

\begin{proposition}
	\label{prop:Markov:cond a}
Suppose Assumption \ref{ass:lowerbound:a} is satisfied for some $x_a>0$. Let $x' \geq x_a$. Then for each $k \in \NN_0$,
		\begin{equation}
	\label{eq:prop:Markov:cond a}
	\lim_{n\to\infty}\E_{x} \left[S_\infty \1_{\{x' \leq S_k\leq S_{k+1}\leq\dots\leq S_n\}}\right]  = 0.
\end{equation}
Moreover, $S$ is a bubble under $\P_{x}$ if and only if there exists $k \in \NN_0$ such that
	\begin{equation*}
		\lim_{n \to \infty} \E_{x} \left[S_n \1_{\{x' \leq S_k \leq S_{k+1} \leq \cdots \leq S_n\}}\right] > 0.
	\end{equation*}
\end{proposition}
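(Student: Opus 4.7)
The plan is to reduce the statement to Proposition \ref{prop:BC} applied with the fixed value $x'$ in place of the generic $x$. Since Proposition \ref{prop:BC} immediately yields both the identity \eqref{eq:prop:Markov:cond a} and the ``in particular'' characterisation of bubbles in terms of $\lim_{n\to\infty}\E_x[S_n\1_{\{x'\leq S_k\leq\cdots\leq S_n\}}]$, the only thing to verify is the divergence hypothesis
\[
\sum_{k=0}^{\infty} \cPR[\P_x]{S_k < x' \text{ or } S_k > S_{k+1}}{\cF_k} = \infty \quad \P_x\text{-a.s.}
\]
By the Markov property of $S$ we have $\cPR[\P_x]{S_{k+1}<S_k}{\cF_k} = a(S_k)$ $\P_x$-a.s., so the sum to control rewrites as
\[
\Sigma := \sum_{k=0}^{\infty} \Bigl(\1_{\{S_k < x'\}} + \1_{\{S_k \geq x'\}}\, a(S_k)\Bigr).
\]

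Next I would argue pathwise that $\Sigma = \infty$ $\P_x$-a.s.\ by splitting the sample space into the event $A := \{S_k < x' \text{ infinitely often}\}$ and its complement. On $A$ the first summand alone already diverges. On $A^c$ there is (pathwise) a random index $K(\omega)$ with $S_k(\omega) \geq x'$ for all $k \geq K(\omega)$; here one exploits Assumption \ref{ass:lowerbound:a} together with the fact that $x' \geq x_a$. The step that requires care is that the lower bound on $a$ furnished by the assumption is only uniform on compact subsets $[x_a,M]$, whereas $S$ is a priori only bounded by a random constant. This is handled by invoking Doob's supermartingale convergence theorem: the nonnegative martingale $S$ converges $\P_x$-a.s.\ to an integrable random variable $S_\infty$, so $M(\omega) := \sup_k S_k(\omega) < \infty$ $\P_x$-a.s. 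Fixing such an $\omega \in A^c$ with $M(\omega) < \infty$, one has $S_k(\omega) \in [x', M(\omega)]$ for $k \geq K(\omega)$, hence $a(S_k(\omega)) \geq c_{M(\omega)} := \inf_{y \in [x_a, M(\omega)]} a(y) > 0$ by Assumption \ref{ass:lowerbound:a}, and summing gives $\Sigma(\omega) \geq \sum_{k \geq K(\omega)} c_{M(\omega)} = \infty$.

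Combining the two cases yields $\Sigma = \infty$ $\P_x$-a.s., so the hypothesis of Proposition \ref{prop:BC} with $x = x'$ is fulfilled. Invoking Proposition \ref{prop:BC} then immediately delivers \eqref{eq:prop:Markov:cond a}, and its ``in particular'' clause delivers the desired equivalence between $S$ being a $\P_x$-bubble and the existence of some $k \in \NN_0$ with $\lim_{n\to\infty}\E_x[S_n\1_{\{x' \leq S_k\leq\cdots\leq S_n\}}] > 0$. The only mildly subtle point is the pathwise case analysis with the random upper bound $M(\omega)$; everything else is a direct appeal to the previously established machinery.
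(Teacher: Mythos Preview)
Your proposal is correct and follows essentially the same argument as the paper: reduce to Proposition~\ref{prop:BC} with $x'$ in place of $x$, rewrite the conditional-probability sum via the Markov property as $\sum_k(\1_{\{S_k<x'\}}+a(S_k)\1_{\{S_k\geq x'\}})$, and verify divergence pathwise by splitting into $\limsup_k\{S_k<x'\}$ and its complement, invoking Doob's convergence theorem to obtain a random finite upper bound $M(\omega)$ so that Assumption~\ref{ass:lowerbound:a} applies on $[x_a,M(\omega)]$. The paper's proof is the same argument with the same ingredients, just slightly more tersely written.
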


\begin{proof}
	By Proposition \ref{prop:BC}, it suffices to check that $\as{\P_{x}}$
	\begin{equation*}
		\sum_{k = 0}^\infty \P_{x}\left[S_k < x'\text{ or } S_k > S_{k+1}\, \middle|\, \cF_k \right] =  \sum_{k = 0}^\infty (\1_{\{S_k < x'\}} + a(S_k)\1_{\{S_k \geq x'\}} )= \infty.
	\end{equation*}
We now distinguish two cases: If $\omega \in \limsup_{k \to \infty} \{S_k < x'\}$, it follows that $\sum_{k = 0}^\infty \1_{\{S_k(\omega) < x'\}}= \infty$. If $\omega \in \liminf_{k \to \infty} \{S_k \geq x'\} \cap \{\sup_{k \geq 0} S_k < \infty\}$, then $\sum_{k = 0}^\infty  a(S_k(\omega))\1_{\{S_k(\omega) \geq x\}} ) = \infty$ by the assumption on $a$. Since $\sup_{k \geq 0} S_k < \infty$ $\as{\P_{x}}$ by Doob's martingale convergence theorem, the claim follows.
\end{proof}

We next aim to give a sufficient characterisation for $S$ being a bubble. To this end, we need to slightly relax the definition of the function $b$ from \eqref{eq:Markov:b}: 
For $\epsilon > 0$, define the function $b_\epsilon : (0, \infty) \to (0, 1]$ by
\begin{equation*}
	b_\epsilon(x) := \E_x\left[\frac{S_1}{x} \1_{\{S_1  < x (1+ \epsilon)\}}\right]= \int_{[0, x(1 +\epsilon))} \frac{y}{x} K(x, \diff y).
\end{equation*}

\begin{theorem}
	\label{thm:Markov:sufficient:bubble}
Suppose Assumption \ref{ass:lowerbound:a} is satisfied and there exists $\epsilon > 0$ and $x_b > 0$ such that the function $b_\epsilon$ is nonincreasing for $x \geq x_b$ and satisfies $\int_{\log(x_b)}^\infty b_\epsilon(\exp(x)) \dd x < \infty$. Then $S$ is a bubble under each $\P_{x}$, for which $S$ is not $\P_{x}$-a.s.~bounded.
\end{theorem}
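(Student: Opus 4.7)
The plan is to apply Proposition \ref{prop:Markov:cond a}: since we already have Assumption \ref{ass:lowerbound:a}, it suffices to exhibit $k \in \NN_0$ and $x' \geq x_a$ with
\begin{equation*}
  \lim_{n \to \infty} \E_x\bigl[S_n \1_{\{x' \leq S_k \leq S_{k+1} \leq \cdots \leq S_n\}}\bigr] \,>\, 0.
\end{equation*}
I would obtain such a lower bound by restricting to the smaller event
\begin{equation*}
  \tilde A^\epsilon_{k,n} := \bigl\{S_{i+1} \geq (1+\epsilon) S_i \text{ for all } k \leq i < n\bigr\} \;\subseteq\; \{S_k \leq S_{k+1} \leq \cdots \leq S_n\},
\end{equation*}
which forces a multiplicative gap of $1+\epsilon$ between consecutive terms and so lets the monotonicity hypothesis on $b_\epsilon$ be iterated cleanly. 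Setting $u_m^\epsilon(z) := \E_z[S_m \1_{\tilde A^\epsilon_{0,m}}]$, a one-step conditioning together with the Markov property gives $u_0^\epsilon(z) = z$ and the recursion $u_{m+1}^\epsilon(z) = \int_{[(1+\epsilon)z, \infty)} u_m^\epsilon(y) K(z, \dd y)$. Moreover $u_m^\epsilon(z)$ is nonincreasing in $m$ because $\E_z[S_{m+1}\1_{\{S_{m+1} \geq (1+\epsilon)S_m\}} \mid \cF_m] = (1 - b_\epsilon(S_m))S_m \leq S_m$.

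The core technical step is the inductive lower bound
\begin{equation*}
  u_m^\epsilon(z) \;\geq\; z \prod_{j=0}^{m-1}\bigl(1 - b_\epsilon(z(1+\epsilon)^j)\bigr), \qquad z \geq x_b,\; m \in \NN_0.
\end{equation*}
In the inductive step, substituting the hypothesis at $y \geq (1+\epsilon)z \geq x_b$ and using $y(1+\epsilon)^j \geq z(1+\epsilon)^{j+1}$ together with the monotonicity of $b_\epsilon$ on $[x_b, \infty)$ yields the uniform bound $1 - b_\epsilon(y(1+\epsilon)^j) \geq 1 - b_\epsilon(z(1+\epsilon)^{j+1})$, independent of $y$. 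Pulling these factors out of the integral and observing that the remaining integral equals $z(1 - b_\epsilon(z))$ by the definition of $b_\epsilon$ closes the induction. I expect this part, in particular arranging things so that the monotonicity of $b_\epsilon$ can be leveraged term by term inside the Markov recursion without losing the positivity of the resulting product, to be the main obstacle.

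To pass to the limit in $m$, a Riemann-sum comparison exploiting the monotonicity of $b_\epsilon$ gives $\sum_{j=0}^\infty b_\epsilon(z(1+\epsilon)^j) \leq b_\epsilon(z) + (\log(1+\epsilon))^{-1}\int_{\log z}^\infty b_\epsilon(e^u)\dd u < \infty$ for every $z \geq x_b$; in particular $b_\epsilon(y) \to 0$ as $y \to \infty$, so I can fix $z_0 \geq \max(x_a, x_b)$ with $b_\epsilon(z_0) < 1$. For every $z \geq z_0$ all factors in the infinite product are then strictly positive, and since the right-hand side is nondecreasing in $z$ I obtain $u_\infty^\epsilon(z) := \lim_m u_m^\epsilon(z) \geq z_0 \prod_{j=0}^\infty\bigl(1 - b_\epsilon(z_0(1+\epsilon)^j)\bigr) =: C > 0$. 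By the Markov property, $\E_x[S_n \1_{\tilde A^\epsilon_{k,n} \cap \{S_k \geq z_0\}}] = \E_x[\1_{\{S_k \geq z_0\}}\, u_{n-k}^\epsilon(S_k)]$; since $u_{n-k}^\epsilon(S_k) \downarrow u_\infty^\epsilon(S_k)$ is dominated by $S_k \in L^1(\P_x)$, dominated convergence yields a limit of at least $C\,\P_x[S_k \geq z_0]$. The assumption that $S$ is not $\P_x$-a.s.~bounded by any constant finally guarantees some $k$ with $\P_x[S_k \geq z_0] > 0$, and Proposition \ref{prop:Markov:cond a} with $x' = z_0$ closes the argument.
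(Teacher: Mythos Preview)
Your proposal is correct and follows essentially the same route as the paper: both restrict to the subevent $\{S_{i+1}\geq(1+\epsilon)S_i\}$, use $\E[\tfrac{S_{i+1}}{S_i}\1_{\{S_{i+1}\geq(1+\epsilon)S_i\}}\mid\cF_i]=1-b_\epsilon(S_i)$ together with the monotonicity of $b_\epsilon$ to peel off a factor $1-b_\epsilon(x_b(1+\epsilon)^j)$ at each step, and then translate positivity of the infinite product into the integral condition $\int_{\log x_b}^\infty b_\epsilon(e^u)\,\dd u<\infty$. Your packaging via the auxiliary functions $u_m^\epsilon$ and the explicit Riemann-sum comparison are cosmetic differences; the paper carries out the same tower-property iteration directly and uses $\E_x[S_k\1_{\{S_k\geq x_b\}}]>0$ in place of your $\P_x[S_k\geq z_0]>0$, but the underlying argument is identical.
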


\begin{proof}
	Suppose that $S$ is not $\P_{x}$-a.s.~bounded. Let $x_a$ be the constant in Assumption \ref{ass:lowerbound:a}. We may assume without loss of generality that $x_b \geq x_a$. By Proposition \ref{prop:Markov:cond a}, it suffices to check that there is $k \in \NN_0$ such that
	\begin{align*}
		&\lim_{n \to \infty} \E_{x} \left[S_n \1_{\{x_b\leq S_k \leq S_{k+1} \leq \cdots \leq S_{k + n}\}}\right] \\
&\quad\geq 	\lim_{n \to \infty} \E_{x0} \bigg[S_k \1_{\{S_k \geq x_b\}} \prod_{j =1}^n \frac{S_{k+j}}{S_{k+j-1}}\1_{\{S_{k+j} \geq S_{k+j-1}(1 + \epsilon)\}}  \bigg]  > 0.
	\end{align*}
	Since $S$ is not $\P_{x}$-a.s.~bounded, there exists $k \in \NN_0$ with $\E_{x}[S_k\1_{\{S_k \geq x_b\}}] > 0$. Fix $n \in \NN$. By the definition of  $b_\epsilon$ , we obtain for $j \in \{1, \ldots n\}$
	\begin{align*}
		&\E_{x} \bigg[\frac{S_{k+j}}{S_{k+j-1}}\1_{\{S_{j+k-1}(1 + \epsilon) \leq S_{j+k}\}} \,\bigg|\, \cF_{k+j-1} \bigg] = (1 - b_\epsilon(S_{k+j-1}))\;\; \as{\P_{x}}
	\end{align*}
	This together with the tower property of conditional expectations and the fact that $b_\epsilon$ is nonincreasing for $x \geq x_b$ gives
	\begin{align*}
		&\E_{x} \bigg[S_k \1_{\{S_k \geq x_b\}} \prod_{j =1}^n \frac{S_{k+j}}{S_{k+j-1}}\1_{\{S_{k+j} \geq S_{k+j-1}(1 + \epsilon)\}}  \bigg]  \\
		&= \E_{x} \bigg[S_k \1_{\{S_k \geq x_b\}} \bigg(\prod_{j =1}^{n-1} \frac{S_{k+j}}{S_{k+j-1}}\1_{\{S_{k+j} \geq S_{k+j-1}(1 + \epsilon)\}}\bigg) (1 - b_\epsilon(S_{k+n-1}))  \bigg] \\ 
		& \geq \E_{x} \bigg[S_k \1_{\{S_k \geq x_b\}} \bigg(\prod_{j =1}^{n-1} \frac{S_{k+j}}{S_{k+j-1}}\1_{\{S_{k+j} \geq S_{k+j-1}(1 + \epsilon)\}}\bigg) (1 - b_\epsilon(x_b(1 +  \epsilon)^{n-1})  \bigg]  \\
		&\geq \E_{x} \bigg[S_k \1_{\{S_k \geq x_b\}} \prod_{j =0}^{n-1}  (1 - b_\epsilon(x_b(1 +\epsilon)^j)) \bigg] \\
&= \E_{x} \left[S_k \1_{\{S_k \geq x_b\}}\right]\prod_{j =0}^{n-1}  (1 - b_\epsilon(x_b(1 + \epsilon)^j)). 
	\end{align*}
	Thus, it remains to show that $\prod_{j =0}^\infty  (1 - b_\epsilon(x_b (1 + \epsilon)^j)) > 0$. The latter condition is equivalent to $\sum_{j =0}^\infty b_\epsilon(\exp(\log (x_b) + \log(1 + \epsilon) j )) < \infty$, which is equivalent to $\int_{\log(x_b)}^\infty b_\epsilon(\exp(x)) \dd x < \infty$.
\end{proof}

We illustrate the above result first by two examples. The first one is a  “smooth” version of Example \ref{ex:complete}; the second one is an example of a “discrete diffusion” for the log price.
\begin{example}
Assume that the Markov kernel is given by
	\begin{equation*}
		K(x,\diff y)=\begin{cases}  \frac{1}{2} \1_{(0, 1)}(y)  \dd y + \frac{1}{2}\1_{(2x-1, 2x)}(y) \dd y&\text{if } x> 1,\\
			 \frac{1}{2x} \1_{(0, 2x)}(y)  \dd y &\text{if } x \leq 1. \end{cases}
	\end{equation*}
	Then $a(x) = \frac{1}{2} $ and for $\epsilon \in (0, 1)$,
$$b_\epsilon(x) = \tfrac{(1+\epsilon)^2}{4}\1_{\{x \leq 1\}} + (1 - x(1 - \tfrac{(1+\epsilon)^2}{4}))\1_{\{1 \leq x \leq  \tfrac{1}{1 -\epsilon}\}}  +\tfrac{1}{4 x} \1_{\{x >  \tfrac{1}{1 -\epsilon}\}} .$$ By Theorem \ref{thm:Markov:sufficient:bubble}, $S$ is a bubble under $\P_{x}$ for all $x>0$. 
\end{example}

\begin{example}
\label{ex:Gaussian}
Let $(Z_k)_{k\in\N}$ be a sequence of i.i.d.~standard normal random variables and $\sigma:\R\to(0,\infty)$ a measurable function such that $\sigma(x)$ is nondecreasing for large values of $x$. Define the process $(X_k)_{k \in \NN_0}$ recursively by $X_0 := 0$ and 
\begin{equation*}
X_{k+1}:=X_k+\sigma(X_k)Z_k-\frac{\sigma^2(X_k)}{2},\quad k\in\N_0.
\end{equation*}
Then the process $S=(S_k)_{k\in\N_0}$ defined by $S_k :=\exp(X_k)$ for $k \in \NN_0$ is a Markov martingale.

Denoting by $\Phi$ the cdf of a standard normal random variable, it is not difficult to check that for $x > 0$ and $\epsilon \geq 0$, 
\begin{equation*}
a(x)=\Phi\left(\frac{\sigma(\log(x))}{2}\right),\qquad b_\eps(x)=\Phi\left(\frac{\log(1+\eps)}{\sigma(\log(x))}-\frac{\sigma(\log(x))}{2}\right)
\end{equation*}
where $b_0 = b$.
Thus, by Proposition \ref{prop:Markov:non bubble}, for $S$ to be a bubble it is necessary that $\sigma(x)\to\infty$ as $x\to\infty$.
Moreover, by Theorem \ref{thm:Markov:sufficient:bubble}, a sufficient condition for $S$ to be a bubble is given by 
\begin{equation}
\label{eq:ex:Gaussian}
\int^\infty_{x_0} \Phi\left(\frac{\log(2)}{\sigma(x)}-\frac{\sigma(x)}{2}\right) \dd x<\infty\quad\text{for some }x_0\in\R.
\end{equation}
Denoting the pdf of a standard normal random variable by $\phi$, using Mill's ratio and the fact that $|(\frac{\log(2)}{\sigma(x)}-\frac{\sigma(x)}{2})^2 - (\frac{\sigma(x)}{2})^2 | \leq (\frac{\log(2)}{\sigma(x)})^2+\log(2)$ is uniformly bounded for all $x$ sufficiently large as $\sigma$ is nondecreasing, it is not difficult to check that \eqref{eq:ex:Gaussian} is equivalent to 
\begin{equation}\label{ex:eq:Gaussian2}
\int^\infty_{x_0} \frac{1}{\sigma(x)}\varphi\left(-\frac{\sigma(x)}{2}\right)\dd x<\infty\quad\text{for some }x_0\in\R.
\end{equation}
\end{example}

\begin{remark}
It is insightful to compare Example \ref{ex:Gaussian} to the continuous-time theory of bubbles. To this end, 
 recall that in continuous time, the process $S = (S_t)_{t \geq 0}$ given by $S_t:=\exp(X_t)$, where $X=(X_t)_{t\geq0}$ solves the SDE 
$$dX_t=\sigma(X_t)dW_t-\frac{1}{2}\sigma^2(X_t)dt,$$ 
is a strict local martingale and hence a bubble if and only if for some $x_0\in\R$,
\begin{equation}\label{ex:eq:Gaussian3}
 \int^\infty_{x_0}\frac{dx}{\sigma^2(x)}<\infty,
\end{equation}
cf.~\cite[Corollary 4.3]{MijatovicUrusov}. While \eqref{ex:eq:Gaussian2} and \eqref{ex:eq:Gaussian3} both say that $S$ is a bubble if and only if $\sigma(x)\to\infty$ fast enough as $x\to\infty$, the exact rate of increase of $\sigma$ required for a bubble is quite different as \eqref{ex:eq:Gaussian3} is a much stronger requirement than \eqref{ex:eq:Gaussian2} on the growth of $\sigma$. The reason for this is that the discretisation of the diffusion model in continuous time should not be done along a deterministic time grid, but along certain sequences of stopping times; see Section \ref{ch:relation} below.
\end{remark}

While Proposition \ref{prop:Markov:non bubble} and Theorem \ref{thm:Markov:sufficient:bubble} give useful general sufficient conditions for the absence or presence of a bubble, respectively, these conditions are not necessary. In the complete Markov case, we can give a necessary and sufficient characterisation of bubbles under mild assumptions on the functions $a$ and $b$.

\begin{theorem}
	\label{thm:Markov:complete}
Suppose the Markov kernel is given by
	\begin{equation*}
		K(x, \diff y) = a(x)\delta_{\frac{b(x)x}{a(x)}} (\diff y) + (1- a(x))\delta_{\frac{(1-b(x))x}{1-a(x)}} (\diff y),
	\end{equation*}
	where $0 \leq b(x) < a(x) < 1$ and  $0 < \liminf_{x \to \infty} a(x)  \leq \limsup_{x \to \infty} a(x) < 1$.  Moreover, suppose that there exists $x_b > 0$ such that the function $b$ is nonincreasing for $x \geq x_b$. Then $S$ is a bubble under $\P_{x}$ if and only if $S$ is not $\P_{x}$-a.s.~bounded and $\int_{\log(x_b)}^\infty b(\exp(x)) \dd x < \infty$.
\end{theorem}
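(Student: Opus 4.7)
The plan is to exploit the fact that in this two-point kernel, the event $\{S_k \leq S_{k+1} \leq \cdots \leq S_n\}$ forces every intermediate jump to be the upward atom, so that conditionally on $S_k = y$ the trajectory is deterministic. Setting $y_0(y) := y$ and $y_{j+1}(y) := \tfrac{1-b(y_j(y))}{1-a(y_j(y))} y_j(y)$, a direct computation from the kernel gives the workhorse identity
\begin{equation*}
\E_x\bigl[S_n\1_{\{S_k \leq \cdots \leq S_n\}}\,\big|\,S_k = y\bigr] = y_{n-k}(y)\prod_{j=0}^{n-k-1}(1-a(y_j(y))) = y\prod_{j=0}^{n-k-1}(1-b(y_j(y))),
\end{equation*}
since the telescoping $y_{n-k}(y) = y \prod \tfrac{y_{j+1}}{y_j}$ combines cleanly with the up-probabilities $(1-a(y_j))$.

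For sufficiency, assume $S$ is not $\P_x$-a.s.~bounded and $\int_{\log x_b}^\infty b(e^u)\,du < \infty$. I will invoke Theorem \ref{thm:Markov:sufficient:bubble}. Assumption \ref{ass:lowerbound:a} follows immediately from $\liminf_{x\to\infty} a(x) > 0$. The integral condition together with monotonicity of $b$ on $[x_b,\infty)$ forces $b(x) \to 0$; combining this with $\liminf a(x) > 0$ shows $\tfrac{a(x)-b(x)}{1-a(x)} \geq \eta$ for some $\eta > 0$ and all large $x$. Choosing $\epsilon \in (0,\eta)$, the upward-jump destination $\tfrac{(1-b(x))x}{1-a(x)}$ exceeds $x(1+\epsilon)$ for large $x$, so by inspection of the kernel $b_\epsilon(x) = b(x)$ there. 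Hence $b_\epsilon$ inherits monotonicity from $b$ and $\int b_\epsilon(e^u)du < \infty$, which is exactly what Theorem \ref{thm:Markov:sufficient:bubble} requires.

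For necessity, a bounded nonnegative martingale is uniformly integrable and therefore cannot be a bubble, so a bubble must fail to be $\P_x$-a.s.~bounded. Assume toward a contradiction that $\int_{\log x_b}^\infty b(e^u)du = \infty$; the plan is to use Corollary \ref{cor:char:equivalent} by showing $\lim_n \E_x[S_n\1_{\{S_k\leq\cdots\leq S_n\}}] = 0$ for every $k \geq 0$. The workhorse identity and monotone convergence (the partial products are decreasing in $n$) give this limit as $\E_x\bigl[S_k \prod_{j=0}^\infty(1-b(y_j(S_k)))\bigr]$, so it suffices to show $\sum_j b(y_j(y)) = \infty$ for every $y > 0$. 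The sequence $(y_j(y))_{j\geq 0}$ is strictly increasing. If $y_j \to y_\infty < \infty$, then once $y_j \geq x_b$ the monotonicity of $b$ makes $b(y_j)$ nonincreasing with limit $c^*$; the subcase $c^* > 0$ makes the series diverge immediately, while $c^* = 0$ would (by monotonicity) force $b \equiv 0$ on $(y_\infty,\infty)$ and $b$ bounded on $[x_b,y_\infty]$, contradicting divergence of the integral. If instead $y_j \to \infty$, then either $b(y_j)\not\to 0$ (series diverges trivially) or $b(y_j)\to 0$, in which case the log-increments $\Delta_j := \log\tfrac{1-b(y_j)}{1-a(y_j)}$ lie eventually in a positive compact interval (lower bound from $\liminf a > 0$ and $b(y_j)\to 0$, upper bound from $\limsup a < 1$), and the standard integral--sum comparison for the nonincreasing function $u\mapsto b(e^u)$ transfers $\int b(e^u)du = \infty$ to $\sum b(y_j)=\infty$. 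Having established $\lim_n \E_x[S_n\1_{\{S_k\leq\cdots\leq S_n\}}] = 0$, the identity \eqref{eq:pf:thm:char:equivalent} forces $\lim_n \E_x[(S_n - S_\infty)\1_{\{x' \leq S_k \leq \cdots \leq S_n\}}] = 0$ for all $x' \geq 0$ and $k \geq 0$, and Corollary \ref{cor:char:equivalent} then rules out $S$ being a bubble.

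The main technical obstacle is the case analysis in the necessity direction: monotonicity of $b$ being assumed only on $[x_b,\infty)$ means one must carefully argue that the bounded-trajectory regime $y_j \to y_\infty < \infty$ is essentially incompatible with $\int b(e^u)du = \infty$, and simultaneously control the log-step sizes $\Delta_j$ uniformly in the unbounded regime to transform the integral divergence into a series divergence.
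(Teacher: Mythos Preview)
Your sufficiency argument is correct and, by packaging the estimate through Theorem~\ref{thm:Markov:sufficient:bubble}, arguably cleaner than the paper's direct derivation.

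The necessity argument, however, has a genuine gap. You claim that $\sum_j b(y_j(y)) = \infty$ for \emph{every} $y > 0$, but this is false under the theorem's hypotheses. Your case analysis for $y_j \to y_\infty < \infty$ begins with ``once $y_j \geq x_b$'', which tacitly assumes $y_\infty > x_b$; the sub-case $y_\infty \leq x_b$ is not covered, and there you have no monotonicity of $b$ to exploit. A concrete obstruction: take $x_b = 2$, set $a \equiv \tfrac12$, $b \equiv \tfrac14$ on $(0,1)\cup[2,\infty)$, and on $[1,2)$ set $a(x) = \tfrac{2-x}{4}$, $b(x) = \tfrac{(2-x)^3}{8}$. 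All hypotheses hold and $\int_{\log 2}^\infty b(e^u)\,du = \infty$, yet the upward trajectory from $y_0 = 1$ converges to $2$ with $2 - y_j$ decaying geometrically, so $\sum_j b(y_j) < \infty$ and your product $\prod(1 - b(y_j))$ stays positive. Consequently $\lim_n \E_x[S_n \1_{\{S_k \leq \cdots \leq S_n\}}]$ need not vanish, and your chain of implications breaks.

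The fix is exactly what the paper does: do not attempt to control all starting points. Instead, enlarge $x_b$ to some $x_b'$ on which additionally $1 + c \leq \tfrac{1-b(x)}{1-a(x)} \leq 1 + C$ (possible once you reduce to the case $b(x)\to 0$, using $0 < \liminf a \leq \limsup a < 1$), and invoke Proposition~\ref{prop:Markov:cond a} to test the bubble property via $\lim_n \E_x[S_n \1_{\{x_b' \leq S_k \leq \cdots \leq S_n\}}]$. On $\{S_k \geq x_b'\}$ the trajectory satisfies $y_j(S_k) \geq x_b'(1+c)^j \to \infty$, so the bounded-trajectory regime simply does not occur, and your (correct) integral--sum comparison for the unbounded regime finishes the job. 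The $S_\infty$ term is already handled by Proposition~\ref{prop:Markov:cond a} via Borel--Cantelli, so you need not pass through identity~\eqref{eq:pf:thm:char:equivalent} at the end.
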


\begin{proof}
	Suppose that $S$ is not $\P_{x}$-a.s.~bounded. We may focus on the case that  $\lim_{x \to \infty} b(x) = 0$. Indeed, otherwise it follows that $\lim_{x \to \infty} b(x) > 0$
	and hence $\int_{\log(x_b)}^\infty b(\exp(x)) \dd x = \infty$ and $S$ fails to be a bubble by Proposition \ref{prop:Markov:non bubble}.
	
Since $\lim_{x \to \infty} b(x) = 0$ and  $0 < \liminf_{x \to \infty} a(x)  \leq \limsup_{x \to \infty} a(x) < 1$, after potentially enlarging $x_b$, we may assume that there exists $0 < c < C$ such that $1 +c  \leq \frac{1- b(x)}{1 - a(x)} \leq 1 + C$ for all $x \geq  x_b$. By Proposition \ref{prop:Markov:cond a}, $S$ is a bubble under $\P_{x}$ if and only if there exists $k \geq 0$ such that
	\begin{align*}
		&\lim_{n \to \infty} \E_{x} \left[S_n \1_{\{x_b\leq S_k \leq S_{k+1} \leq \cdots \leq S_{k + n}\}}\right] >  0.
	\end{align*}
Using that $\{S_j \leq S_{j+1}\} = \{S_{j+1} = \frac{1- b(S_j)}{1 - a(S_j)} S_j\}$ for $j \in \NN_0$ and arguing as in the proof of Theorem \ref{thm:Markov:sufficient:bubble} gives for each $0 \leq k \leq n$,
	\begin{align*}
		&\E_{x} \left[S_k \1_{\{S_k \geq x_b\}}\right] \prod_{j =0}^{n-1}  (1 - b(x_b(1 + C)^j)) \geq \E_{x} 	\left[S_n \1_{\{x_b\leq S_k \leq S_{k+1} \leq \cdots \leq S_{k + n}\}}\right] \\
		&\qquad\qquad\qquad\geq \E_{x} \left[S_k \1_{\{S_k \geq x_b\}}\right] \prod_{j =0}^{n-1}  (1 - b(x_b(1 + c)^j)). 
	\end{align*}
	Now using that for $\gamma \in \{c, C\}$, $\prod_{j =0}^\infty  (1 - b(x_b (1 + \gamma)^j) > 0$ if and only if $\sum_{j =0}^\infty b(\exp(\log (x_b) + \log(1 + \gamma) j )) < \infty$, which in turn is equivalent to $\int_{\log(x_b)}^\infty b(\exp(x)) \dd x < \infty$, the claim follows.
\end{proof}


\section{A fixed point equation associated to a Markovian bubble}\label{ch:fixedpoint}
In this section, we continue our study of Markov martingales, taking a more analytic perspective. We assume throughout that $S=(S_k)_{k \in \NN_0}$ is a positive Markov martingale with kernel $K:(0,\infty)\times\mathcal{B}_{(0, \infty)}\to[0,\infty)$, starting from $S_0=x>0$. The key object of this section is the \emph{default function} of~$S$.

\begin{definition}
	\label{def:default}
The Borel-measurable function $M_S: (0, \infty) \to [0, \infty)$, defined by
\begin{equation}\label{M}
	M_S(x):=\lim_{n\to\infty}\E_x\left[(S_n-S_\infty)\1_{\{S_n\geq S_{n-1}\geq \dots\geq S_1\geq x\}}\right],
\end{equation}
is called the \emph{default function} of $S$.
\end{definition}

It follows from \eqref{eq:pf:thm:char:equivalent} (with $k = 0$) that $M_S(x) =\E_x[S_{\tau_1}]$, so that $M_S$ measures the loss of mass at the first drawdown of $S$. It is clear that $\P_{x}$ is a bubble measure for $S$ if $M_S(x) > 0$. The following two results show that $M$ essentially fully characterises the bubble behaviour of $S$ under $\P_x$ for all $x > 0$.

\begin{proposition}
	\label{prop:MS:zero}
Suppose $M_S(x) = 0$ for all $x \geq x' > 0$. Then $M_S(x) = 0$ for all $x > 0$, and $\P_x$ fails to be a bubble measure for $S$ for any $x > 0$.
\end{proposition}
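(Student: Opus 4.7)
The plan is to upgrade the hypothesis from $y \geq x'$ to all $y > 0$, after which a short Markov induction will give the non-bubble assertion. By \eqref{eq:pf:thm:char:equivalent} with $k = 0$ (noting $\tilde\tau_0 = \tau_1$), the hypothesis $M_S(y) = 0$ for $y \geq x'$ is equivalent to $\E_y[S_{\tau_1}] = y$ for all $y \geq x'$. Fix $x \in (0, x')$ and introduce the hitting time $\sigma := \inf\{k \geq 0 : S_k \geq x'\} \geq 1$ and the stopping time $T := \tau_1 \wedge \sigma$.

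The first step is to show $\E_x[S_T] = x$, with the convention $S_T := S_\infty$ on $\{T = \infty\}$. The crucial feature is that on $\{T > n\}$ the chain $(S_j)_{0 \leq j \leq n}$ is nondecreasing (since $\tau_1 > n$) and bounded above by $x'$ (since $\sigma > n$), so $S_n \leq x'$ there. Starting from the optional-sampling identity $\E_x[S_{T \wedge n}] = x$ and splitting according to $\{T \leq n\}$ versus $\{T > n\}$, monotone convergence on the first piece and dominated convergence (with dominating constant $x'$) on the second give the claim as $n \to \infty$.

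Next I would decompose $\E_x[S_{\tau_1}]$ into the pieces $\{\tau_1 < \sigma\}$, $\{\sigma < \tau_1, \sigma < \infty\}$, and $\{\sigma = \tau_1 = \infty\}$; the case $\sigma = \tau_1 < \infty$ cannot occur, because $\sigma = j \geq 1$ forces $S_{j-1} < x' \leq S_j$, incompatible with $\tau_1 = j$. On $\{\tau_1 < \sigma\}$ one has $S_{\tau_1} = S_T$ directly. Since $\{\sigma \leq \tau_1\} = \{S_1 \leq \cdots \leq S_\sigma\} \in \cF_\sigma$, on $\{\sigma < \tau_1, \sigma < \infty\}$ the strong Markov property at $\sigma$ together with the hypothesis $\E_y[S_{\tau_1}] = y$ at $y = S_\sigma \geq x'$ yields $\E_x[S_{\tau_1}\1_{\sigma < \tau_1, \sigma < \infty}] = \E_x[S_\sigma\1_{\sigma < \tau_1, \sigma < \infty}] = \E_x[S_T\1_{\sigma < \tau_1, \sigma < \infty}]$. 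On $\{\sigma = \tau_1 = \infty\}$, $S_{\tau_1} = S_\infty = S_T$. Summing the three contributions gives $\E_x[S_{\tau_1}] = \E_x[S_T] = x$, i.e., $M_S(x) = 0$.

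For the final assertion, the strong Markov property at $\tau_{k-1}$ combined with the now globally valid identity $\E_y[S_{\tau_1}] = y$ yields
\begin{equation*}
\E_x[S_{\tau_k}] = \E_x\bigl[\1_{\tau_{k-1} < \infty}\,\E_{S_{\tau_{k-1}}}[S_{\tau_1}]\bigr] + \E_x[S_\infty\1_{\tau_{k-1} = \infty}] = \E_x[S_{\tau_{k-1}}],
\end{equation*}
so induction on $k$ gives $\E_x[S_{\tau_k}] = x = \E_x[S_0]$ for every $k \in \NN_0$; hence $\P_x$ is not a bubble measure. I expect the only real subtlety to be the strong Markov bookkeeping in the second step — in particular verifying the $\cF_\sigma$-measurability of $\{\sigma \leq \tau_1\}$ and identifying $S_{\tau_1}$ on $\{\sigma \leq \tau_1\}$ with the first-drawdown value of the chain shifted by $\sigma$.
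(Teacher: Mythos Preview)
Your argument is correct, but it takes a substantially different route from the paper. The paper's proof is a three-line application of Corollary~\ref{cor:char:equivalent}: for each fixed $k$, condition on $\cF_k$ and use the (ordinary) Markov property to identify
\[
\lim_{n\to\infty}\E_x\bigl[(S_n-S_\infty)\1_{\{x'\le S_k\le\cdots\le S_n\}}\bigr]
=\E_x\bigl[M_S(S_k)\1_{\{S_k\ge x'\}}\bigr]=0,
\]
which by Corollary~\ref{cor:char:equivalent}(b) rules out a bubble under $\P_x$; the vanishing of $M_S(x)$ for $x<x'$ then falls out as a corollary rather than being proved first. Your proof inverts this logic: you first extend $M_S\equiv 0$ to all of $(0,\infty)$ by a direct hitting-time argument (stopping at $T=\tau_1\wedge\sigma$, showing $\E_x[S_T]=x$ via MCT/DCT, and then piecing $\E_x[S_{\tau_1}]$ back together with the strong Markov property at $\sigma$), and only afterwards deduce the non-bubble conclusion by the same strong-Markov induction that appears in the proof of Theorem~\ref{thm:char:equivalent}. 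The paper's approach is shorter because Corollary~\ref{cor:char:equivalent} has already absorbed the hitting-time work (its proof also stops at $\sigma_{\ell,x}$); yours is more self-contained and avoids citing that corollary, at the cost of re-doing part of its content. Both are valid; the only delicate point you flag --- the $\cF_\sigma$-measurability of $\{\sigma<\tau_1\}$ and the identification of the post-$\sigma$ first drawdown --- is handled correctly.
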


\begin{proof}
Fix $x > 0$. Then for all $k \in \NN_0$, the Markov property of $S$, the choice of $x'$ and dominated convergence give
\begin{align*}
	&\lim_{n\to\infty}\E_x[(S_n-S_\infty)\1_{\{x' \leq S_k\leq S_{k+1}\leq\dots\leq S_n\}}] \\
&\qquad= \E_{x}\left[\lim_{n \to \infty} \E_{S_k}[(S_n-S_\infty)\1_{\{S_k\leq S_{k+1}\leq\dots\leq S_n\}}] \1_{\{S_k \geq x'\}}\right] \\
&\qquad= \E_x\left[M_S(S_k)\1_{\{S_k \geq x'\}}\right] = 0.
\end{align*}
Thus, Corollary \ref{cor:char:equivalent} implies that $S$ is not a bubble under $\P_x$ and so $M_S(x) = 0$.
\end{proof}

\begin{proposition}
	\label{prop:MS:liminf}
	Suppose that $\liminf_{x \to \infty} M_S(x) > 0$. Then $S$ is a bubble under $\P_x$ for all $x > 0$ for which $S$ is not $\P_x$-a.s. bounded.
\end{proposition}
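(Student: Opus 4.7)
The plan is to invoke the bubble characterisation of Theorem~\ref{thm:char:equivalent}(b): I will exhibit some $k \in \NN_0$ for which $\E_x[S_{\tilde\tau_k}] < \E_x[S_k] = x$. The crucial observation is that, in the Markov setting, this inequality can be rewritten purely in terms of the default function $M_S$ evaluated at $S_k$.

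Unwinding Definition~\ref{def:default} via \eqref{eq:pf:thm:char:equivalent} (applied with $k = 0$ under $\P_y$ and using $\tilde\tau_0 = \tau_1$) gives $\E_y[S_{\tau_1}] = y - M_S(y)$; in particular $M_S \geq 0$, since $\E_y[S_{\tau_1}] \leq y$ by the nonnegative supermartingale property of the stopped process $S^{\tilde\tau_0}$ under $\P_y$. The Markov property applied at the deterministic time $k$ then yields $\E_x[S_{\tilde\tau_k} \mid \cF_k] = \E_{S_k}[S_{\tau_1}] = S_k - M_S(S_k)$, so that
\begin{equation*}
\E_x[S_{\tilde\tau_k}] \;=\; x \,-\, \E_x[M_S(S_k)].
\end{equation*}
Hence it suffices to locate some $k \in \NN_0$ with $\E_x[M_S(S_k)] > 0$.

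For this, set $\delta := \liminf_{y \to \infty} M_S(y) > 0$ and choose $x_0 > 0$ with $M_S(y) \geq \delta/2$ for all $y \geq x_0$, so that $\E_x[M_S(S_k)] \geq (\delta/2)\,\P_x[S_k \geq x_0]$. Because $S$ is not $\P_x$-a.s.\ bounded, $\P_x[\sup_{j \in \NN_0} S_j > x_0] > 0$; combining the inclusion $\{\sup_j S_j > x_0\} \subseteq \bigcup_j \{S_j > x_0\}$ with $\sigma$-additivity then produces some $k \in \NN_0$ with $\P_x[S_k \geq x_0] > 0$, completing the argument.

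The only delicate step I anticipate is the Markov identity $\E_x[S_{\tilde\tau_k} \mid \cF_k] = \E_{S_k}[S_{\tau_1}]$: one must verify that $\tilde\tau_k - k$ coincides with the first drawdown of the shifted chain $(S_k, S_{k+1}, \ldots)$, that $S_{\tilde\tau_k}$ (with the convention $S_{\tilde\tau_k} := S_\infty$ on $\{\tilde\tau_k = \infty\}$) is a measurable functional of that shifted chain, and that this convention is compatible with the one tacitly used to define $\E_y[S_{\tau_1}]$. Once this is in place, everything else reduces to the monotone/dominated convergence arguments already developed in the proof of Theorem~\ref{thm:char:equivalent}.
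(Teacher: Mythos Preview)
Your proof is correct and follows essentially the same route as the paper: both arguments use the Markov property at the deterministic time $k$ to reduce the relevant bubble criterion to the positivity of $\E_x[M_S(S_k)]$ (the paper keeps the indicator $\1_{\{S_k \geq x'\}}$ and appeals to Corollary~\ref{cor:char:equivalent}, whereas you drop it and appeal to Theorem~\ref{thm:char:equivalent}(b)), and then exploit the fact that $S$ is not $\P_x$-a.s.\ bounded to find $k$ with $\P_x[S_k \geq x_0] > 0$. Your identification of the Markov identity $\E_x[S_{\tilde\tau_k}\mid\cF_k]=S_k-M_S(S_k)$ as the only delicate step is accurate, and the paper handles it in the same way (by passing the limit inside via dominated convergence).
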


\begin{proof}
	Fix $x > 0$ and suppose that $S$ is not $\P_x$-a.s. bounded. By hypothesis, there exists $x' \geq x$ such that $M_S(y) > 0$ for all $y \geq x'$. Since $S$ is not  $\P_x$-a.s.~bounded, there is $k \geq  0$ such that $\P_x[S_k \geq x'] > 0$. Then by the Markov property of $S$, the choice of $x'$ and dominated convergence,
	\begin{align*}
		&\lim_{n\to\infty}\E_x[(S_n-S_\infty)\1_{\{x' \leq S_k\leq S_{k+1}\leq\dots\leq S_n\}}] \\
&\qquad= \E_{x}\left[\lim_{n \to \infty} \E_{S_k}[(S_n-S_\infty)\1_{\{S_k\leq S_{k+1}\leq\dots\leq S_n\}}] \1_{\{S_k \geq x'\}}\right] \\
		&\qquad= \E_x\left[M_S(S_k)\1_{\{S_k \geq x'\}}\right] > 0.
	\end{align*}
	Thus, $S$ is a bubble under $\P_x$ by Corollary \ref{cor:char:equivalent}.
\end{proof}

In the remainder of this section, we seek to characterise the function $M_S$ in an analytic way and provide conditions for it to be non-zero.

\medskip{}
First, we show that $M_S$ solves a fixed point equation, more precisely a homogeneous \emph{Volterra integral equation of the second kind}, cf.~Brunner \cite{brunner:17} for a textbook treatment.

\begin{lemma}
	\label{lem:default fn}
	The default function $M_S$ is a solution to the Volterra integral equation
	\begin{equation}\label{Mfixed}
		M_S(x)=\int_{[x,\infty)}  M_S(y)K(x,\diff y),\quad x>0.
	\end{equation}
\end{lemma}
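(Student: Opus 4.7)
The plan is to unfold one step of the Markov chain in the definition of $M_S(x)$. Writing $A_n(x) := \{S_n \geq S_{n-1} \geq \cdots \geq S_1 \geq x\}$, one has the factorisation $\1_{A_n(x)} = \1_{\{S_1 \geq x\}}\1_{\{S_n \geq \cdots \geq S_2 \geq S_1\}}$. Conditioning on $\cF_1$, the Markov property (together with the shift-invariance of the tail variable $S_\infty = \lim_k S_{k+1}$) gives $\P_x$-a.s.
\[
\E_x\bigl[(S_n - S_\infty)\1_{\{S_n \geq \cdots \geq S_2 \geq S_1\}}\bigm|\cF_1\bigr] = g_n(S_1),
\]
where $g_n(y) := \E_y\bigl[(S_{n-1} - S_\infty)\1_{\{S_{n-1} \geq \cdots \geq S_1 \geq y\}}\bigr]$ satisfies $g_n(y) \to M_S(y)$ as $n \to \infty$ for every $y > 0$. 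Taking expectations and disintegrating with respect to the law $K(x, \cdot)$ of $S_1$ then yields the pre-limit identity
\[
\E_x\bigl[(S_n - S_\infty)\1_{A_n(x)}\bigr] = \int_{[x, \infty)} g_n(y)\, K(x, \diff y).
\]

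It then remains to interchange $\lim_n$ with integration against $K(x, \diff y)$. Dominated convergence is the natural tool: the martingale identity $\E_y[S_{n-1}] = y$ and Fatou's lemma applied to the nonnegative martingale $S$ give $\E_y[S_\infty] \leq y$, yielding the $n$-uniform bound $|g_n(y)| \leq \E_y[S_{n-1} + S_\infty] \leq 2y$. Since $\int_{[0, \infty)} 2y\, K(x, \diff y) = 2x < \infty$, again by the martingale property at time one, the interchange is legitimate and produces the desired Volterra equation $M_S(x) = \int_{[x, \infty)} M_S(y)\, K(x, \diff y)$.

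The main technical point is the careful bookkeeping of the Markov shift: one has to check that conditioning on $\{S_1 = y\}$ transforms both the indicator event $\{S_n \geq \cdots \geq S_2 \geq S_1\}$ and the tail variable $S_\infty$ into their $\P_y$-counterparts, so that the resulting function of $y$ is genuinely $g_n(y)$ in the same sense in which $M_S$ was originally defined. Everything else is essentially routine, the first-moment integrability of $K(x, \cdot)$ required for dominated convergence being exactly the martingale property of $S$.
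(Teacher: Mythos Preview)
Your proof is correct and follows essentially the same approach as the paper's: both unfold one step of the Markov chain, invoke the Markov property (including for the tail variable $S_\infty$), and interchange limit and integral via dominated convergence. The paper starts from the right-hand side $\int_{[x,\infty)} M_S(y)\,K(x,\diff y)$ and works towards $M_S(x)$, whereas you go in the opposite direction; and you are more explicit about the dominating function $2y$ (integrable against $K(x,\diff y)$ by the martingale property), which the paper leaves implicit.
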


\begin{proof}
	Fix $x>0$. Using the definition of $M_S$, dominated convergence and the Markov property of $S$, we obtain
	\begin{align*}
		\int_{[x,\infty)}M_S(y)K(x,\diff y)&=\E_x\left[M_S(S_1)\1_{\{S_1\geq x\}}\right] \\
&=\E_x\left[\lim_{n \to \infty}\E_{S_1}\left[(S_n-S_\infty)\1_{\{S_n\geq S_{n-1}\geq \dots\geq S_1\geq x\}}\right]\right]\\
		&=\lim_{n \to\infty}\E_x\left[(S_n-S_\infty)\1_{\{S_n\geq S_{n-1}\geq \dots\geq S_1\geq x\}}\right]=M_S(x). 
	\end{align*}
\end{proof}

Note that \eqref{Mfixed} is non-standard in that the domain is non-compact. Therefore, we cannot apply standard existence and uniqueness results for Volterra integral equations, cf.~\cite[Chapter 8]{brunner:17}. In fact, existence is anyway not an issue since the zero function always solves \eqref{Mfixed}. Since the bubble case corresponds to \eqref{Mfixed} having a non-zero (nonnegative) solution, we are actually interested in \emph{non-uniqueness}, i.e., the case that \eqref{Mfixed} has multiple nonnegative solutions. By homogeneity of \eqref{Mfixed}, we then always have infinitely many solutions and so it is clear that we need an additional condition to pin down the default function $M_S$.

It follows from the definition of $M_S$, that $M_S(x) \leq x$ for all $x > 0$. So we consider nonnegative solutions to \eqref{Mfixed} that are dominated by the identity. To this end, denote by $\cI$ all Borel-measurable functions $M: (0, \infty) \to [0, \infty)$ satisfying $M(x) \leq x$ for all $x > 0$. Using that 
\begin{align*}
0 \leq \int_{[x,\infty)} M(y)K(x,\diff y) \leq \int_{(0,\infty)} M(y)K(x,\diff y) \leq \int_{(0,\infty)} y K(x,\diff y) = x
\end{align*}
for all $M \in \cI$ and $x > 0$, we can define the map
$\cK : \cI \to \cI$ by
\begin{equation*}
	\cK(M)(x) = \int_{[x,\infty)} M(y)K(x,\diff y), \quad x > 0.
\end{equation*}
Then the nonnegative solutions to \eqref{Mfixed} dominated by the identity are precisely given by fixed points of $\cK$.

While the map $\cK$ is in general not a contraction (and therefore \eqref{Mfixed} may have multiple solutions on $\cI$), it is \emph{monotone}, and this property will prove crucial for our subsequent analysis.

\begin{proposition}
	\label{prop:cK mon}
	The map $\cK$ is monotone on $\cI$.
\end{proposition}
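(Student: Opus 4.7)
The plan is to prove monotonicity by a direct appeal to the monotonicity of the Lebesgue integral with respect to a nonnegative measure. Monotonicity here means that whenever $M_1, M_2 \in \cI$ satisfy $M_1(y) \leq M_2(y)$ for every $y > 0$, we have $\cK(M_1)(x) \leq \cK(M_2)(x)$ for every $x > 0$. So I fix such $M_1, M_2$ and an arbitrary $x > 0$.

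On the Borel set $[x, \infty)$ the integrand $M_1$ is dominated pointwise by $M_2$, both integrands are nonnegative, and $K(x, \cdot)$ is a nonnegative (in fact sub-probability) measure. Hence
\begin{equation*}
\cK(M_1)(x) = \int_{[x,\infty)} M_1(y)\,K(x,\diff y) \leq \int_{[x,\infty)} M_2(y)\,K(x,\diff y) = \cK(M_2)(x).
\end{equation*}
Since $x > 0$ was arbitrary, this yields $\cK(M_1) \leq \cK(M_2)$ pointwise, which is the claim.

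There is no real obstacle: the fact that $\cK$ maps $\cI$ into $\cI$ (verified in the text immediately preceding the proposition via the martingale identity $\int y\,K(x,\diff y) = x$) guarantees that both sides of the displayed inequality are finite, so monotonicity of the integral applies without any $\infty - \infty$ issues. I would keep the write-up to a single short paragraph.
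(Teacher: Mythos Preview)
Your proof is correct and follows essentially the same approach as the paper: fix $M_1 \leq M_2$ in $\cI$, fix $x>0$, and apply monotonicity of the integral with respect to the nonnegative measure $K(x,\cdot)$ to conclude $\cK(M_1)(x)\leq\cK(M_2)(x)$. The paper's argument is identical, just slightly terser.
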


\begin{proof}
	Let $M_1, M_2 \in \cI$ with $M_1 \leq M_2$. Then monotonicity of the integral gives for $x > 0$,
	\begin{equation*}
		\cK(M_1)(x) = \int_{[x,\infty)} M_1(y)K(x,\diff y) \leq \int_{[x,\infty)} M_2(y)K(x,\diff y) =  	\cK(M_2)(x).
	\end{equation*}
\end{proof}

Due to monotonicity of $\cK$, it is very useful to consider \emph{subsolution} and \emph{supersolutions} to~\eqref{Mfixed} on~$\cI$.

\begin{definition}
	A function $M \in \cI$ is called a \emph{subsolution} to \eqref{Mfixed} if 
	\begin{equation*}
		M(x) \leq \int_{[x,\infty)} M(y)K(x,dy),\quad x>0.
	\end{equation*}
	It is called a \emph{supersolution} to \eqref{Mfixed} if
	\begin{equation*}
		M(x) \geq \int_{[x,\infty)} M(y)K(x,dy),\quad x>0.
	\end{equation*}
\end{definition}

The following result shows that we can construct from each sub- or supersolution a solution to \eqref{Mfixed} by Picard iteration. To this end, for $n \in \NN_0$, define $\cK^n(M)$ recursively  by $\cK^0(M) := M$ and $\cK^n(M) := \cK(\cK^{n-1}(M))$ for $n \geq 1$.

\begin{proposition}
	\label{prop:subsupersol}
	Let $M \in \cI$ be a sub- or supersolution to \eqref{Mfixed}. Then the limit $\cK^\infty(M) = \lim_{n \to \infty} K^n(M)$ exists and is a solution to \eqref{Mfixed}. Moreover, 
\begin{itemize}
\item if $M$ is a subsolution, then the sequence $(\cK^n(M))_{n \in \NN}$ is nondecreasing and $\cK^\infty(M)$ is the smallest solution dominating $M$;
\item if $M$ is a supersolution, then the sequence $(\cK^n(M))_{n \in \NN}$ is nonincreasing and $\cK^\infty(M)$ is the largest solution dominated by $M$.
\end{itemize}
\end{proposition}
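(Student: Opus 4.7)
The plan is to exploit monotonicity of $\cK$ (Proposition \ref{prop:cK mon}) to show that the Picard iterates $(\cK^n(M))_{n \in \NN_0}$ form a monotone sequence in $\cI$, which therefore has a pointwise limit in $\cI$, and then pass to the limit inside the defining integral via standard convergence theorems. The subsolution and supersolution cases are entirely symmetric; I would treat the subsolution case in detail and indicate the adjustment for supersolutions.

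Assume $M$ is a subsolution, i.e., $M \leq \cK(M)$. Applying $\cK$ to both sides and using monotonicity, one obtains by straightforward induction that $\cK^n(M) \leq \cK^{n+1}(M)$ for all $n \in \NN_0$. Since every $\cK^n(M)$ belongs to $\cI$, it is pointwise bounded above by the identity. Thus for each $x>0$, $(\cK^n(M)(x))_{n \in \NN_0}$ is a nondecreasing sequence in $[0,x]$, so the pointwise limit $\cK^\infty(M)(x) := \lim_{n \to \infty} \cK^n(M)(x)$ exists in $[0,x]$, and $\cK^\infty(M) \in \cI$ by Borel measurability of pointwise limits of measurable functions.

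To verify that $\cK^\infty(M)$ solves \eqref{Mfixed}, I would invoke monotone convergence for the nondecreasing integrands $\cK^n(M)|_{[x,\infty)}$ under $K(x, \diff y)$:
\[
\cK(\cK^\infty(M))(x) = \int_{[x,\infty)} \lim_{n \to \infty} \cK^n(M)(y)\, K(x, \diff y) = \lim_{n \to \infty} \cK^{n+1}(M)(x) = \cK^\infty(M)(x).
\]
For minimality, suppose $\tilde M \in \cI$ is any solution to \eqref{Mfixed} with $\tilde M \geq M$. Then $\tilde M = \cK(\tilde M)$, and monotonicity of $\cK$ together with induction yields $\tilde M = \cK^n(\tilde M) \geq \cK^n(M)$ for all $n$, so passing to the limit gives $\tilde M \geq \cK^\infty(M)$.

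The supersolution case proceeds identically, with $(\cK^n(M))$ now a nonincreasing sequence bounded below by $0$. The only change is that the interchange of limit and integral is justified by dominated convergence rather than monotone convergence, with dominating function $y \mapsto y$; this is $K(x, \cdot)$-integrable on $[x, \infty)$ because $\int y\, K(x, \diff y) = x < \infty$ by the martingale property. No substantive obstacle arises beyond these standard interchanges of limit and integral, which are clean in both directions of monotonicity.
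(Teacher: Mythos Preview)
Your proof is correct and follows essentially the same approach as the paper: monotonicity of $\cK$ yields a monotone sequence of iterates in $\cI$, the pointwise limit is shown to solve \eqref{Mfixed} via monotone convergence, and minimality/maximality is obtained by the same induction against an arbitrary solution $\tilde M$. You even make explicit the justification for the supersolution case (dominated convergence with dominator $y\mapsto y$), which the paper leaves implicit by declaring the argument ``analogous.''
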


\begin{proof}
	We only consider the case that $M$ is a subsolution; the proof for the case that $M$ is a supersolution is analogous. If $M$ is a subsolution, the sequence $(\cK^n(M))_{n \in \NN_0}$ is nondecreasing in $M$ by monotonicity of $\cK$. Hence the limit $\lim_{n \to \infty} K^n(M)$ exists and is in $\cI$ since each $\cK^n(M)$ is in $\cI$. Moreover, it follows from monotone convergence that
	\begin{align*}
		\cK^\infty(M)(x) &= \lim_{n \to \infty} \cK^n(M)(x)=  \lim_{n \to \infty} \int_{[x,\infty)} \cK^{n-1}(M)(y) K(x,\diff y) \\
		&= \int_{[x,\infty)} \cK^\infty(M)(y) K(x,\diff y), \quad x > 0,
	\end{align*}
	whence $\cK^\infty(M)$ is a solution to \eqref{Mfixed}.
	
	Moreover, let $\tilde{M} \in \cI$ be any solution to \eqref{Mfixed} dominating $M$. It suffices to show that $\tilde M \geq \cK^n(M)$ for all $n \in \NN_0$. We argue by induction. The induction basis is trivial. For the induction step, suppose that $n \geq 1$ and $\tilde M \geq \cK^{n-1}(M)$. Then by the induction hypothesis and the definition of $\cK^n(M)$, for $x > 0$,
	\begin{equation*}
		\tilde{M}(x)=\int_{[x,\infty)}\tilde{M}(y)K(x,\diff y)\geq \int_{[x,\infty)}\cK^{n-1}(M)(y)K(x,\diff y)=\cK^n(M)(x).
	\end{equation*}
\end{proof}

We note the following important corollary.

\begin{corollary}
	\label{cor:iteration} 
	The largest solution to \eqref{Mfixed} on $\cI$ is given by $\cK^\infty(\id)$, where $\id$ denotes the identity function.
\end{corollary}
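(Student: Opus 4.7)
The plan is to apply Proposition \ref{prop:subsupersol} to the identity function $\id$. The key observation is that $\id$ is a supersolution to \eqref{Mfixed}: indeed, $\id \in \cI$ trivially, and the martingale property of $S$ combined with the fact that we are integrating only over $[x,\infty) \subseteq (0,\infty)$ gives
\begin{equation*}
\cK(\id)(x) = \int_{[x,\infty)} y\, K(x,\diff y) \leq \int_{(0,\infty)} y\, K(x, \diff y) = x = \id(x), \quad x > 0.
\end{equation*}
Hence $\id$ is a supersolution (in fact, it is strict unless $S$ never jumps downward from $x$).

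Once $\id$ is identified as a supersolution, Proposition \ref{prop:subsupersol} applies immediately and tells us that $\cK^\infty(\id) = \lim_{n \to \infty} \cK^n(\id)$ exists, is a solution to \eqref{Mfixed} on $\cI$, and is in fact the \emph{largest} solution dominated by $\id$.

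The final step is to observe that the phrase ``dominated by $\id$'' is not a restriction at all: by the definition of $\cI$, every $M \in \cI$ satisfies $M(x) \leq x = \id(x)$ for all $x > 0$. Therefore every solution to \eqref{Mfixed} in $\cI$ is automatically dominated by $\id$, and so $\cK^\infty(\id)$ is the largest solution to \eqref{Mfixed} on $\cI$. There is no real obstacle here; the entire content of the corollary is the recognition that $\id$ serves as the top element of $\cI$ and simultaneously as a supersolution, so the monotone iteration from above developed in Proposition \ref{prop:subsupersol} reaches the maximal fixed point.
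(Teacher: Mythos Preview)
Your proof is correct and is exactly the argument the paper intends: the corollary is stated without proof precisely because it follows immediately from Proposition~\ref{prop:subsupersol} applied to $\id$, which is a supersolution by the inequality $\int_{[x,\infty)} y\,K(x,\diff y) \leq \int_{(0,\infty)} y\,K(x,\diff y) = x$ already noted just before Proposition~\ref{prop:cK mon}. Your observation that ``dominated by $\id$'' is no restriction on $\cI$ is the only remaining point, and you make it cleanly.
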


It follows from Lemma \ref{lem:default fn} and Corollary \ref{cor:iteration} that the default function $M_S$ is dominated by $\cK^\infty(\id)$. Under a mild assumption on the kernel $K$, we can assert that $M_S$ coincides with $\cK^\infty(\id)$. Thus, in this case, we can characterise the default function $M_s$ as the maximal solution to \eqref{Mfixed} dominated by the identity.

\begin{theorem}\label{thm:M max solution}
	Suppose that Assumption \ref{ass:lowerbound:a} is satisfied for any $x_a > 0$. Then $M_S$ is the maximal solution to \eqref{Mfixed} dominated by the identity. It is given  by $M_S = \cK^\infty(\id)$. Moreover, $M_S(x) <  x$ for all $x > 0$ and if $M_S \neq 0$, then 
	\begin{equation}
		\label{eq:thm:M max solution}
		\limsup_{x \to \infty} \frac{M_S(x)}{x} = 1.
	\end{equation}
\end{theorem}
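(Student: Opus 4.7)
The plan is to tackle the three claims sequentially: the identification $M_S = \cK^\infty(\id)$, the strict inequality $M_S(x) < x$, and the limsup identity.

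For the identification, Lemma \ref{lem:default fn} shows $M_S$ is a fixed point of $\cK$ in $\cI$, so Corollary \ref{cor:iteration} already gives $M_S \leq \cK^\infty(\id)$. For the reverse inequality, Assumption \ref{ass:lowerbound:a} enters: choosing $x_a < x$ and applying Proposition \ref{prop:Markov:cond a} with $k = 0$ and $x' = x$ yields
\begin{equation*}
\lim_{n \to \infty} \E_x[S_\infty \1_{\{S_0 \leq S_1 \leq \cdots \leq S_n\}}] = 0,
\end{equation*}
so the definition \eqref{M} of $M_S$ collapses to $M_S(x) = \lim_n \E_x[S_n \1_{\{S_0 \leq \cdots \leq S_n\}}]$. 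A routine induction on $n$, using the Markov property, identifies $\cK^n(\id)(x) = \E_x[S_n \1_{\{S_0 \leq \cdots \leq S_n\}}]$, and monotone convergence (Proposition \ref{prop:subsupersol} applied to the supersolution $\id$) concludes $\cK^\infty(\id)(x) = M_S(x)$.

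The strict inequality is then quick: the hypothesis forces $a(x) > 0$ for every $x > 0$ (take $x_a$ slightly below $x$), and since $S$ is positive, $b(x) > 0$ as well. Hence $\cK(\id)(x) = \int_{[x, \infty)} y K(x, \diff y) = x(1 - b(x)) < x$, and the nonincreasing iteration yields $M_S(x) = \cK^\infty(\id)(x) \leq \cK(\id)(x) < x$.

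For the limsup identity \eqref{eq:thm:M max solution} I argue by contradiction. Suppose $M_S \neq 0$ but $L^* := \limsup_{x \to \infty} M_S(x)/x < 1$. Pick $\epsilon > 0$ with $L^* + \epsilon < 1$ and $X_0 > 0$ such that $M_S(y) \leq (L^* + \epsilon) y$ for all $y \geq X_0$. Iterating the fixed-point identity $M_S = \cK(M_S)$, exactly as in the derivation of Lemma \ref{lem:default fn}, yields for each $n \in \NN$
\begin{equation*}
M_S(x) = \E_x\bigl[M_S(S_n) \1_{\{S_0 \leq S_1 \leq \cdots \leq S_n\}}\bigr].
\end{equation*}
For $x \geq X_0$ the indicator forces $S_n \geq x \geq X_0$, so $M_S(S_n) \leq (L^* + \epsilon) S_n$; combining this bound with the identification $M_S(x) = \lim_n \E_x[S_n \1_{\{S_0 \leq \cdots \leq S_n\}}]$ from the first step and sending $n \to \infty$ yields $M_S(x) \leq (L^* + \epsilon) M_S(x)$. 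The contrapositive of Proposition \ref{prop:MS:zero} supplies some such $x \geq X_0$ with $M_S(x) > 0$ (otherwise $M_S$ would vanish identically, contrary to $M_S \neq 0$); dividing forces $L^* + \epsilon \geq 1$, the desired contradiction.

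The main obstacle is this last step, where the $n \to \infty$ limit must reproduce $M_S(x)$ on both sides of the inequality cleanly, and the degenerate scenario that $M_S$ is nontrivial only at small $x$ must be ruled out -- which is precisely what Proposition \ref{prop:MS:zero} enables.
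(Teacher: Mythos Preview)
Your identification $M_S = \cK^\infty(\id)$ and the strict inequality $M_S(x) < x$ are handled essentially as in the paper (the paper computes $\cK^n(\id)(x) = \E_x[S_n \1_{\{S_0 \leq \cdots \leq S_n\}}]$ directly and then invokes Corollary \ref{cor:iteration} for maximality, while you first note $M_S \leq \cK^\infty(\id)$ via the corollary and then establish equality --- the content is the same).

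For the limsup identity \eqref{eq:thm:M max solution} your argument is correct but genuinely different from the paper's. The paper argues structurally: assuming $H_S(x') := \sup_{y \geq x'} M_S(y)/y < 1$ for some $x'$, it rescales to $M(x) := M_S(x)/H_S(x')$ on $[x',\infty)$ (and $0$ below), checks that this is a subsolution in $\cI$ strictly exceeding $M_S$ somewhere, and then Proposition \ref{prop:subsupersol} produces a solution in $\cI$ strictly larger than $M_S$, contradicting maximality. Your route is more direct and avoids the subsolution machinery altogether: from the iterated identity $M_S(x) = \E_x[M_S(S_n)\1_{\{S_0 \leq \cdots \leq S_n\}}]$ and the bound $M_S(S_n) \leq (L^* + \epsilon) S_n$ on the relevant event you obtain $M_S(x) \leq (L^* + \epsilon)\,\cK^n(\id)(x) \to (L^* + \epsilon) M_S(x)$, and Proposition \ref{prop:MS:zero} furnishes a point where division is legitimate. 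Both arguments lean on Proposition \ref{prop:MS:zero} at the same spot. Your approach is shorter and uses only the identification already established; the paper's approach has the advantage of illustrating once more how the sub-/supersolution framework organises the problem.
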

\begin{proof}
	We first show by induction that for each $n \in \NN_0$,
	\begin{equation*}
		\cK^n(\id)(x) = \E_x\left[S_n\1_{\{S_n\geq S_{n-1}\geq \dots\geq S_1\geq S_0\}}\right], \quad x > 0.
	\end{equation*}
	The induction basis $n = 0$ follows from the martingale property of $S$. For the induction step, let $n \geq 1$. By the definition of $\cK^n(\id)$ and the Markov property of $S$, we obtain
	\begin{align*}
		\cK^n(\id)(x) &=	\int_{[x,\infty)}\cK^{n-1}(\id)(y)K(x,\diff y)=\E_x\left[\cK^{n-1}(\id)(S_1)\1_{\{S_1\geq x\}}\right]\\
		&=\E_x\left[\E_{S_1}\left[S_n\1_{\{S_n\geq S_{n-1}\geq \dots\geq S_1\}}\right]\1_{\{S_1\geq x\}}\right] \\ &=\E_x\left[S_n\1_{\{S_n\geq S_{n-1}\geq \dots\geq S_1\geq S_0\}}\right].
	\end{align*}
	Hence, the definitions of $\cK^\infty(\id)$ and $M_S$ together with \eqref{eq:prop:Markov:cond a} give
	\begin{align*}
		\cK^\infty(\id)(x) &= \lim_{n \to \infty}\E_x\left[S_n\1_{\{S_n\geq S_{n-1}\geq \dots\geq S_1\geq S_0\}}\right] \\
		&=  \lim_{n \to \infty}\E_x\left[(S_n - S_\infty)\1_{\{S_n\geq S_{n-1}\geq \dots\geq S_1\geq S_0\}}\right] = M_S(x), \quad x > 0.
	\end{align*}
	It follows from Corollary \ref{cor:iteration} that $M_S$ is the maximal solution to \eqref{Mfixed} dominated by the identity. 
	
	Moreover, $M_S \leq \id$, the fact that $\P_x[S_1 <x] = a(x) > 0$ for all $x > 0$ and the martingale property of $S$ give
	\begin{align*}
		M_S(x)=\E_x\big[M_S(S_1)\1_{\{S_1\geq x\}}\big]\leq\E_x\big[S_1\1_{\{S_1\geq x\}}\big]<\E_x\big[S_1\big]=x.
	\end{align*}

	Finally, to establish \eqref{eq:thm:M max solution}, denote the function $H_S: (0, \infty) \to [0, 1]$ by $H_S(x) = \sup_{y \geq x} \tfrac{M_S(x)}{x}$. If $M_S \neq 0$, it follows from Proposition \ref{prop:MS:zero} that $H_S(x) >0$ for all $x > 0$. 	It suffices to show that $H_S(x) = 1$ for all $x > 0$. Seeking a contradiction, suppose there exists $x' > 0$ such that $H_S(x') < 1$. Define the function $M: (0, \infty) \to [0, \infty)$ by 
	\begin{equation*}
	M(y) =
	\begin{cases}
0  & \text{if } x < x', \\
\frac{M_S(x)}{H_S(x')} & \text{if } x \geq x'.
	\end{cases}
	\end{equation*}
Then $M \in \cI$ by the fact that $H_S(x') \geq \tfrac{M_S(x)}{x}$ for $x \geq x'$. Moreover, $M$ is a subsolution to \eqref{Mfixed} and $M(x) > M_S(x)$ for $x \geq x'$. This together with Proposition \ref{prop:subsupersol} implies that $\cK^\infty(M)$ is a solution to  \eqref{Mfixed}  dominated by the identity. Since $\cK^\infty(M)(x) \geq M(x) > M_S(x)$ for $x > x'$, this is in contradiction to $M_S$ being the maximal solution to \eqref{Mfixed}  dominated by the identity.
\end{proof}

The following corollary shows that if we can find a non-trivial subsolution to \eqref{Mfixed}, then $S$ is a bubble.

\begin{corollary}\label{cor:existence}
	Suppose Assumption \ref{ass:lowerbound:a} is satisfied for any $x_a > 0$. If $M \in \cI$ is a subsolution to \eqref{Mfixed}, then $M \leq M_S$. If in addition $\liminf_{x \to \infty} M(x) > 0$, then $S$ is a bubble under $\P_x$ for all $x > 0$ for which $S$ is not $\P_x$-a.s. bounded.
\end{corollary}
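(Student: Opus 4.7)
The plan is to combine the existence of $\cK^\infty(M)$ from Proposition \ref{prop:subsupersol} with the maximality characterisation of $M_S$ from Theorem \ref{thm:M max solution}, and then to feed the resulting lower bound on $M_S$ into Proposition \ref{prop:MS:liminf}.

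First I would tackle the inequality $M \leq M_S$. Since $M \in \cI$ is a subsolution to \eqref{Mfixed}, Proposition \ref{prop:subsupersol} produces $\cK^\infty(M) \in \cI$ as the smallest solution to \eqref{Mfixed} dominating $M$; in particular $M \leq \cK^\infty(M)$ pointwise. Because Assumption \ref{ass:lowerbound:a} holds for every $x_a > 0$, Theorem \ref{thm:M max solution} applies and identifies $M_S$ as the \emph{maximal} solution to \eqref{Mfixed} in $\cI$. Hence $\cK^\infty(M) \leq M_S$, and chaining the two inequalities gives $M \leq M_S$, which is the first assertion.

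Next I would derive the bubble conclusion from the growth hypothesis on $M$. Assuming $\liminf_{x \to \infty} M(x) > 0$, the bound $M \leq M_S$ established above immediately yields $\liminf_{x \to \infty} M_S(x) > 0$. Proposition \ref{prop:MS:liminf} then applies and says that $S$ is a bubble under $\P_x$ for every $x > 0$ for which $S$ is not $\P_x$-a.s.\ bounded.

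There is no genuine obstacle here: the entire argument is a short two-step chain that simply assembles the machinery already built in the section (Proposition \ref{prop:subsupersol}, Theorem \ref{thm:M max solution}, and Proposition \ref{prop:MS:liminf}). The only mild point worth flagging in the write-up is that Assumption \ref{ass:lowerbound:a} is invoked twice — implicitly in Theorem \ref{thm:M max solution} to get maximality of $M_S$, and implicitly again through Proposition \ref{prop:MS:liminf} via its reliance on Corollary \ref{cor:char:equivalent} — so one should explicitly note that the hypothesis on $a$ is what links the variational characterisation of $M_S$ to the probabilistic notion of a bubble.
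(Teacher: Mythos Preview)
Your proof is correct and follows exactly the same two-step chain as the paper: $M \leq \cK^\infty(M) \leq M_S$ via Proposition~\ref{prop:subsupersol} and Theorem~\ref{thm:M max solution}, then Proposition~\ref{prop:MS:liminf} for the bubble conclusion. One small inaccuracy in your commentary: Proposition~\ref{prop:MS:liminf} and Corollary~\ref{cor:char:equivalent} do \emph{not} require Assumption~\ref{ass:lowerbound:a}; that hypothesis is used only once, in Theorem~\ref{thm:M max solution}, to identify $M_S$ as the maximal solution.
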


\begin{proof}
Proposition \ref{prop:subsupersol} and Theorem \ref{thm:M max solution}  give $M \leq \cK^\infty(M) \leq M_S$. The additional claim then follows from Proposition \ref{prop:MS:liminf}.
\end{proof}

A typical candidate for a subsolution in Corollary \ref{cor:existence} is the call function $M(x) = (x -L)^+$ for some $L > 0$.  This is illustrated by the following example. Note that this example cannot be addressed with the results from Section \ref{sec:Markov}.
\begin{example}
	Suppose that for all $x>0$, $K(x,\diff y)=k(x,y) \dd y$, where the density $k$ satisfies
	\[k(x,y)=\frac{2}{3(x+1)}\1_{[x, 2x]}(y)\quad \text{for }y\geq x>0,\]
	and $k(x, y)$ on $\{(x,y):0<y<x\}$ is chosen such that $K$ is a martingale kernel.
Then the function $a$ from \eqref{eq:Markov:a} satisfies $a(x) = 1 - \int_{[x, \infty)} k(x, y) \dd y = \tfrac{3+x}{3 + 3x} \geq \frac{1}{3}$ so that Assumption \ref{ass:lowerbound:a} is satisfied for any $x_a > 0$.
	Consider $M(x):=(x-3)^+$. Then trivially $	\int_x^\infty  M(y)k(x,y)\dd y \geq 0 = M(x)$ for $x \leq 3$ and 
	\begin{equation*}
		\int_x^\infty M(y) k(x,y) \dd y= x - 3\frac{x}{x+1} \geq x-3 = M(x) \text{ for } x > 3.
	\end{equation*}
	It follows that $M$ is subsolution to \eqref{Mfixed} and we may deduce that $S$ has a  bubble under $P_x$ for all $x > 0$ by Corollary \ref{cor:existence} since $S$ is not $\P_x$-a.s.~bounded for any $x > 0$.
\end{example}

While Theorem \ref{thm:M max solution} provides a characterisation of the default function $M$, it does not provide a criterion to decide whether \eqref{Mfixed} has a non-trivial, i.e., a non-zero nonnegative solution dominated by the identity. Moreover, it does not provide a criterion to decide whether a given candidate solution $M$ to \eqref{Mfixed} is indeed maximal.  Under a stronger assumption on the kernel $K$,  we can provide a sufficient criterion for the existence of non-trivial solutions to \eqref{Mfixed} dominated by the identity. Moreover, we obtain a local uniqueness result in this case. To this end, recall the definitions of the functions $a$ and $b$ from \eqref{eq:Markov:a} and \eqref{eq:Markov:b}, respectively. Moreover, denote by $\Vert \cdot \Vert_{\sup}$ the supremum norm.
\begin{theorem}
	\label{thm:contraction}
	Suppose $\inf_{x > 0} a(x) > 0$.  Then the following are equivalent:
	\begin{enumerate}
		\item [(a)] $\sup_{x > 0} x b(x) < \infty$.
		\item [(b)] For all $L > 0$ sufficiently large, the call function $M(x) = (x - L)^+$ is a subsolution to \eqref{Mfixed}.
		\item [(c)] The default function $M_S$ is non-trivial and satisfies $\Vert \id - M_S\Vert_{\sup} < \infty$.
	\end{enumerate}
	Moreover, if one of the above conditions is satisfied, $M_S$ is the unique solution to \eqref{Mfixed} among all solutions $M \in \cI$ satisfying $\Vert  \id - M \Vert_{\sup} < \infty$.
\end{theorem}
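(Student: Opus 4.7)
The plan is to close the cycle (a) $\Rightarrow$ (b) $\Rightarrow$ (c) $\Rightarrow$ (a) and then handle uniqueness separately. Since $\inf_{x>0} a(x) > 0$, Assumption \ref{ass:lowerbound:a} is satisfied for every choice of $x_a > 0$, so both Theorem \ref{thm:M max solution} and Corollary \ref{cor:existence} are available. For (a) $\Rightarrow$ (b), I would take $M(x) = (x-L)^+$; the subsolution inequality is trivial for $x \leq L$, and for $x > L$ the fact that $y \geq x > L$ eliminates the positive part and lets me compute
\begin{equation*}
	\int_{[x,\infty)} (y-L)^+ K(x,\diff y) = \int_{[x,\infty)} y\, K(x,\diff y) - L(1 - a(x)) = x - xb(x) - L + La(x),
\end{equation*}
using $\int_{(0,\infty)} y\, K(x,\diff y) = x$ (martingale property) and $\int_{[0,x)} y\, K(x,\diff y) = xb(x)$. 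Comparing with $M(x) = x - L$, the subsolution condition reduces to $xb(x) \leq La(x)$, which is satisfied for every $x > 0$ as soon as $L \geq \sup_{x > 0} xb(x)/\inf_{x > 0} a(x)$.

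The implication (b) $\Rightarrow$ (c) follows directly from Corollary \ref{cor:existence}: if $(x-L)^+$ is a subsolution, it is dominated by $M_S$, which yields $\Vert \id - M_S\Vert_{\sup} \leq L < \infty$ as well as nontriviality. For (c) $\Rightarrow$ (a), I would plug $M_S(y) \leq y$ into the Volterra equation to obtain
\begin{equation*}
	M_S(x) = \int_{[x,\infty)} M_S(y) K(x,\diff y) \leq \int_{[x,\infty)} y\, K(x,\diff y) = x - xb(x),
\end{equation*}
so $xb(x) \leq x - M_S(x) \leq \Vert \id - M_S\Vert_{\sup}$ uniformly in $x$.

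For uniqueness, let $\tilde M \in \cI$ be any solution to \eqref{Mfixed} with $\Vert \id - \tilde M\Vert_{\sup} < \infty$. Theorem \ref{thm:M max solution} gives $\tilde M \leq M_S$, so $\Delta := M_S - \tilde M \geq 0$ solves the same linear Volterra equation, and $\Delta \leq \id - \tilde M$ is bounded. Setting $C := \Vert \Delta \Vert_{\sup} < \infty$ and $c := \inf_{x>0} a(x) > 0$, I would estimate
\begin{equation*}
	\Delta(x) = \int_{[x,\infty)} \Delta(y) K(x,\diff y) \leq C\, K(x, [x,\infty)) = C(1 - a(x)) \leq C(1 - c),
\end{equation*}
and take the supremum in $x$ to conclude $C \leq (1-c)C$, forcing $C = 0$. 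The main obstacle is really this last step: the equivalence of (a)--(c) reduces to a first-moment computation via the decomposition $x = xb(x) + \int_{[x,\infty)} y\, K(x,\diff y)$, whereas the uniqueness argument genuinely needs both hypotheses at once -- boundedness of $\id - M$ to ensure $\Delta$ is bounded, and $\inf a > 0$ to turn the integral operator into a strict contraction on $L^\infty$.
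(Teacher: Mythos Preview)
Your proof of the equivalence (a)$\Leftrightarrow$(b)$\Leftrightarrow$(c) is correct and essentially identical to the paper's: the same first-moment decomposition $x = xb(x) + \int_{[x,\infty)} y\,K(x,\diff y)$ drives all three implications, and your threshold $L \geq \sup_{x>0} xb(x)/\inf_{x>0} a(x)$ is exactly the paper's choice.

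For uniqueness your argument is correct but takes a slightly different route. The paper shows that $\cK$ is a contraction with factor $1 - \inf_{x>0} a(x)$ on the complete metric space $\cI_{\sup} := \{M \in \cI : \Vert \id - M\Vert_{\sup} < \infty\}$ and invokes Banach's fixed point theorem; this requires first checking that $\cK$ maps $\cI_{\sup}$ into itself, which the paper does via (b). You instead exploit the maximality of $M_S$ from Theorem~\ref{thm:M max solution} to order any competing solution below $M_S$, and then apply the same contraction estimate directly to the nonnegative, bounded difference $\Delta = M_S - \tilde M$. Your route is a bit more economical, since it bypasses the self-mapping verification and the appeal to Banach; the paper's route is marginally more informative, since it establishes that $\cK$ is a genuine contraction on all of $\cI_{\sup}$, so Picard iteration converges to $M_S$ from \emph{any} starting point there, not only from supersolutions.
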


\begin{proof}
	(a) $\Rightarrow$ (b). Set $\alpha := \inf_{x > 0} a(x) > 0$, $\beta := \sup_{x > 0} x b(x)$ and $L \geq \frac{\beta}{\alpha}$. Then the call function $M(x) :=  (x - L)^+$ satisfies $	\int_{[x, \infty)} M(y)K(x, \diff y)\geq 0 = M(x)$ for $x \leq L$ and 
	\begin{align*}
		\int_{[x, \infty)} M(y) K(x, \dd y) &= x (1 - b(x)) - L(1- a(x)) = M(x)  - \beta(x) x + \alpha(x) L \\
		&\geq M(x) -\beta + \alpha L \geq M(x), \quad x > L.
	\end{align*}
	Hence, $M$ is a subsolution to \eqref{Mfixed}.
	
	(b) $\Rightarrow$ (c). Let $L > 0$ such that $M(x) = (x - L)^+$ is a subsolution to \eqref{Mfixed}. Then $M_S \geq M$ by Corollary \ref{cor:existence}, whence $M_S$ is non-trivial and satisfies $\Vert \id - M_S\Vert_{\sup} < \Vert \id - M\Vert_{\sup}  = L$.
	
	(c) $\Rightarrow$ (a). Since $\cK(\id) \geq \cK^\infty(\id) = M_S$ by Proposition \ref{prop:subsupersol} and Theorem \ref{thm:M max solution}, it follows that $\Vert \id - \cK(\id)  \Vert_{\sup} \leq \Vert \id - M_S \Vert_{\sup} < \infty$. Now the claim follows from the fact that $x b(x) = \int_{(0, x)} y K(x, \diff y) =  x - \cK(\id)(x)= \id(x) - \cK(\id)(x)$ for $x > 0$.
	
	For the additional claim, set $\cI_{\sup} := \{M \in \cI: \Vert \id - M \Vert_{\sup}< \infty \}$. Then $\cI_{\sup}$ is a complete metric space for the metric generated by the supremum norm. Moreover, $\cK$ maps  $\cI_{\sup}$ to itself. Indeed, for each $M \in \cI_{\sup}$, by (b), there exists $L \geq \Vert \id - M \Vert_{\sup}$ such that $\tilde M (x) := (x-L)^+$ is a subsolution to \eqref{Mfixed}. Hence by monotonicity of $\cK$ and the fact that $\tilde M \leq M $ is a subsolution, 
	\begin{equation*}
		\Vert \id - \cK(M) \Vert_{\sup} \leq 	\Vert \id - \cK(\tilde M) \Vert_{\sup}  \leq \Vert \id - \tilde M \Vert_{\sup} = L < \infty.
	\end{equation*}
	Finally, we show that $\cK$ is a contraction on $\cI_{\sup}$. Let $M_1, M_2 \in \cI_{\sup}$. Then 
	\begin{align*}
		|\cK(M_1)(x) - \cK(M_2)(x)| &= \bigg\vert\int_{[x, \infty)} (M_1(y)- M_2(y)) K(x, \diff y)\bigg\vert \\
&\leq \Vert M_1 - M_2 \Vert_{\sup} \int_{[x, \infty]} K(x, \diff y) \\
		&\leq (1 - \alpha)\Vert M_1 - M_2 \Vert_{\sup}.
	\end{align*}
	Taking the supremum over $x$ shows that $\cK$ is indeed a contraction since $\alpha > 0$. Now Banach's fixed point theorem implies that $\cK$ contains a unique fixed point, and by (c), this fixed point is $M_S$.
\end{proof}

We proceed to illustrate Theorem \ref{thm:contraction} by an example.

\begin{example}\label{ex:fixedpoint} Suppose that for all $x>0$, $K(x,\diff y)=k(x,y) \dd y$, where the density satisfies
	\[k(x,y)=\frac{e}{2}\cdot\frac{1-e^{-x}}{1-e^{-y}} \frac{1}{x} e^{-y/x} \quad \text{for }y\geq x>0,\]
	and $k(x, y)$ on $\{(x,y):0<y<x\}$ is chosen such that $K$ is a martingale kernel.
Note that 
	\begin{equation*}
		\int_x^\infty k(x,y) \dd y < \frac{e}{2} \int_x^\infty  \frac{1}{x} e^{-y/x} \dd y = \frac{1}{2}.
	\end{equation*}
	This implies in particular that $a(x) \geq 1/2$ for all $x > 0$. In this case, the fixed point equation \eqref{Mfixed} is given by
\begin{align*}
	M(x) &=\int_x^\infty M(y)\cdot\frac{e}{2}\cdot\frac{1-e^{-x}}{1-e^{-y}}\cdot\frac{1}{x}\cdot e^{-y/x} \dd y \notag \\
\Leftrightarrow\quad\frac{M(x)x}{1-e^{-x}} &=\frac{e}{2}\int_x^\infty \frac{M(y)}{1-e^{-y}}\cdot e^{-y/x} \dd y.
	\end{align*}
	As can easily be checked, the function $M_\lambda(x) := \lambda x(1-e^{-x})$, $x > 0$, is a solution  in $\cI$ to \eqref{Mfixed}  for any $\lambda \in [0, 1]$. As $M_S$ is the largest solution to \eqref{Mfixed} dominated by the identity, this yields the candidate $M_1(x) =x(1-e^{-x})$. Since $\Vert \id - M_1 \Vert_{\sup} = e^{-1} < \infty$, it follows from Theorem  \ref{thm:contraction}  that $M_S = M_1$.
\end{example}

Combing Theorem \ref{thm:contraction} with Proposition \ref{prop:MS:liminf}, we get the following existence results for bubbles. Note that this result covers cases that cannot be treated with the theory of Section \ref{sec:Markov}.
\begin{corollary}
	Suppose that $\inf_{x > 0} a(x) > 0$ and $\sup_{x > 0} x b(x) < \infty$. Then $S$ is a bubble under $\P_x$ for all $x > 0$ for which $S$ is not $\P_x$-a.s. bounded.
\end{corollary}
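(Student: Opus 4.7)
The plan is to derive this corollary as a direct combination of Theorem \ref{thm:contraction} and Proposition \ref{prop:MS:liminf}, with essentially no new work required. The key observation is that the hypothesis $\inf_{x > 0} a(x) > 0$ is strictly stronger than Assumption \ref{ass:lowerbound:a}: for any $x_a > 0$ and any $y > x_a$, we trivially have $\inf_{x \in [x_a, y]} a(x) \geq \inf_{x > 0} a(x) > 0$. Hence the hypotheses of Theorem \ref{thm:contraction} are met, and Assumption \ref{ass:lowerbound:a} is available for invoking Proposition \ref{prop:MS:liminf}.

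Next I would appeal to implication (a) $\Rightarrow$ (c) of Theorem \ref{thm:contraction}: since $\sup_{x > 0} x b(x) < \infty$, the default function $M_S$ is non-trivial and satisfies $\Vert \id - M_S \Vert_{\sup} < \infty$. In particular, there exists a constant $L \geq 0$ such that $M_S(x) \geq x - L$ for every $x > 0$, which immediately yields
\begin{equation*}
\liminf_{x \to \infty} M_S(x) = \infty > 0.
\end{equation*}

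Finally I would apply Proposition \ref{prop:MS:liminf}, which asserts precisely that $\liminf_{x \to \infty} M_S(x) > 0$ implies $S$ is a bubble under $\P_x$ for every $x > 0$ for which $S$ is not $\P_x$-a.s.\ bounded. This concludes the argument. There is no genuine obstacle here; the corollary is a packaged restatement of the ``sufficiency'' halves of the two preceding results, and the only thing to verify is the trivial compatibility of the hypotheses.
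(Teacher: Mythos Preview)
Your proposal is correct and follows exactly the route the paper indicates: combine Theorem \ref{thm:contraction}(a)$\Rightarrow$(c) with Proposition \ref{prop:MS:liminf}. One harmless inaccuracy: Proposition \ref{prop:MS:liminf} does not require Assumption \ref{ass:lowerbound:a}, so you need that assumption only to feed into Theorem \ref{thm:contraction} (which in any case assumes the stronger $\inf_{x>0} a(x)>0$ directly); otherwise the argument is exactly as intended.
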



\section{Relation to the strict local martingale definition of asset price bubbles in continuous time models}\label{ch:relation}
 
In this final section, we discuss how our definition of  bubbles in discrete time relates to the strict local martingale definition of bubbles in continuous time. To approach this question, one first has to discretise a positive continuous local martingale $X = (X_t)_{t \geq 0}$ in continuous time in such a way that it becomes a discrete time martingale.  Of course, there are many ways to do this and we choose a somewhat canonical construction. To wit, we consider localising sequences $(\tau_n)_{n \in \NN}$ of stopping times with $\tau_n\to\infty$ $\p$-a.s.~such that for each $n$, $\tau_n$ and the stopped process $X^{\tau_n}$ are uniformly bounded. We then define the discrete time process $S = (S_n)_{n \in \NN}$ by $S_n : = X_{\tau_n}$. Then $S$ is a martingale by the stopping theorem and satisfies $S_\infty = X_\infty$ $\as{\p}$, which implies that $S$ is uniformly integrable if and only if $X$ is uniformly integrable.

The simplest way to get localising sequences as above is to choose two increasing sequences of positive real numbers $a=(a_n)_{n \in \NN}$ and $b=(b_n)_{n \in \NN}$ converging to infinity and to define the sequence $(\tau^{a,b}_n)_{n \in \NN}$ of stopping times by $\tau^{a,b}_0:=0$ and
\begin{equation}
	\label{eq:tau ab}
\tau^{a,b}_n:=\inf\{t\geq0:\ X_t \geq b_n\}\wedge a_n, \quad n \in \NN.
\end{equation}
Then $(\tau^{a,b}_n)_{n\in\N}$ is a localizing sequence of stopping times for $X$ with $\tau^{a, b}_n \leq a_n$ and $\sup_{t \geq 0} X^{\tau^{a, b}_n}_t \leq b_n$.

In the special case that $X$ is a Markov process, we would like to stop in such a way that the discrete time process $S$ is again a Markov process. In this case, the simplest way to get localising sequences as above is to choose two constants $\alpha, \beta > 0$ and to define the sequence of stopping times $(\tau^{\alpha,\beta}_n)_{n \in \NN}$ by $\tau^{\alpha,\beta}_0:=0$ and
\begin{equation}
		\label{eq:tau alphabet}
	\tau^{\alpha,\beta}_n:=\inf\left\{t\geq \tau^{\alpha, \beta}_{n - 1}:\ X_t \geq (1+\beta) X_{\tau^{\alpha,\beta}_{n-1}}\right\} \wedge \left(\tau^{\alpha,\beta}_{n-1}+ \alpha\right), \quad n \in \NN.
\end{equation}
In this case it is still true that  $(\tau^{\alpha,\beta}_n)_{n\in\N}$ is a localizing sequence of stopping times for $X$ and $\tau^{\alpha, \beta}_n$ and $X^{\tau^{\alpha, \beta}_n}$ are uniformly bounded.

Our first goal in this section is to show that if $X$ is a continuous positive strict local martingale, then the discrete time process $S$ is a bubble for either choice of stopping times.

The proof of this result relies on the following deep characterisation of strict local martingales in continuous time; cf.~\cite{Meyer,DelbaenSchachermayer,KKN}. Let $X = (X_t)_{t \geq 0}$ be a positive c\`adl\`ag local $\P$-martingale with $X_0 = x$. Then under some technical assumptions on the probability space and the underlying filtration, there exists a probability measure $\q$ with $\q|_{\F_t}\gg\p|_{\F_t}$ for all $t\geq0$ such that $Y:=1/X$ is a nonnegative true $\q$-martingale, and for all bounded stopping times $\tau$ and $A\in\cF_\tau$,
\[\p[A]=x\cdot\E^\q\left[Y_\tau \1_{A}\right].\]
Especially, we have the identity 
 \[\E^\p[X_0-X_t] =x\cdot\q[Y_t=0], \quad t \geq 0,\]
 i.e.,~$X$ is a strict local martingale on $[0,t]$ if and only if $\q[Y_t=0]>0$.
 
 With this, we have the following two results. 
 
 \begin{proposition}
 	\label{prop:tau ab}
 	Let $X=(X_t)_{t\geq0}$ be a continuous positive strict local $\p$-martingale. Let $a=(a_n)_{n \in \NN}$ and $b=(b_n)_{n \in \NN}$ be increasing sequences of positive real numbers converging to $\infty$. Define the sequence of stopping times  $(\tau^{a,b}_n)_{n \in \NN}$ by \eqref{eq:tau ab} and set $S^{a,b}_n = X_{\tau^{a,b}_n}$ for $n \in \NN_0$. Then the measure $\p$ is a bubble measure for the discrete time martingale $S^{a,b}=(S^{a,b}_n)_{n\in\N_0}$.
 \end{proposition}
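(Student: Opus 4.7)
The plan is to apply characterisation (c) from Theorem~\ref{thm:char:equivalent}: with $A_{k,n} := \{S^{a,b}_k \leq S^{a,b}_{k+1} \leq \cdots \leq S^{a,b}_n\}$ and $A_{k,\infty} := \bigcap_{n \geq k} A_{k,n}$, I need to exhibit some $k \in \NN_0$ such that $\lim_{n \to \infty} \E^\p[(S^{a,b}_n - S^{a,b}_\infty) \1_{A_{k,n}}] > 0$. The central tool will be the cited change of measure: under $\q$, $Y := 1/X$ is a nonnegative $\q$-martingale with $\q|_{\F_t} \gg \p|_{\F_t}$, and $X$ being a strict local $\p$-martingale translates via the identity $\E^\p[X_0-X_t]=x\,\q[Y_t=0]$ into $\q[\sigma_0 < \infty] > 0$, where $\sigma_0 := \inf\{t \geq 0 : Y_t = 0\}$; since $Y$ is absorbed at $0$ under $\q$, $\{\sigma_0 < \infty\} \subseteq \{Y_\infty = 0\}$.

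First I would record that $T_n := \tau^{a,b}_n$ is nondecreasing in $n$ with $T_n \uparrow \infty$ $\p$-a.s.\ (because $\sup_{t \geq 0} X_t < \infty$ $\p$-a.s.), so $S^{a,b}_\infty = X_\infty$ $\p$-a.s.; and since $X^{T_n}$ is bounded by $b_n$, optional sampling gives $\E^\p[X_{T_n}] = x$, which via the change of measure forces $\q[Y_{T_n} > 0] = 1$, i.e., $\p \sim \q$ on $\F_{T_n}$. Because $A_{k,n} \in \F_{T_n}$, the change of measure then yields $\E^\p[S^{a,b}_n \1_{A_{k,n}}] = x\,\q[A_{k,n}] \to x\,\q[A_{k,\infty}]$, and dominated convergence gives $\E^\p[S^{a,b}_\infty \1_{A_{k,n}}] \to \E^\p[X_\infty \1_{A_{k,\infty}}]$.

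The main technical step will be to establish
\[
\E^\p[X_\infty \1_{A_{k,\infty}}] = x \, \q[A_{k,\infty} \cap \{Y_\infty > 0\}].
\]
Since $S^{a,b}_n \uparrow X_\infty$ on $A_{k,\infty}$, monotone convergence reduces this to computing $\lim_n \E^\p[S^{a,b}_n \1_{A_{k,\infty}}]$. A Bayes-type calculation gives $\p[A_{k,m} \mid \F_{T_n}] = \E^\q[\1_{A_{k,m}} Y_{T_m} \mid \F_{T_n}]/Y_{T_n}$ for $m \geq n \geq k$; letting $m \to \infty$ by conditional dominated convergence (dominant $Y_{T_k}$, since $Y_{T_m} \leq Y_{T_k}$ on $A_{k,m}$, and with $\lim_m Y_{T_m} = Y_\infty$ $\q$-a.s.\ via $T_m \to \sigma_0$) yields the corresponding identity for $A_{k,\infty}$. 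Substituting and applying dominated convergence as $n \to \infty$ (noting $\1_{A_{k,\infty}} Y_\infty / Y_{T_n} \to \1_{A_{k,\infty} \cap \{Y_\infty > 0\}}$ since $Y_{T_n} \downarrow Y_\infty$ on $A_{k,\infty}$) completes the identity. Combining the above estimates,
\[
\lim_{n \to \infty} \E^\p\bigl[(S^{a,b}_n - S^{a,b}_\infty) \1_{A_{k,n}}\bigr] = x \, \q[A_{k,\infty} \cap \{Y_\infty = 0\}].
\]

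To finish, I need this quantity positive for some $k$. On $\{\sigma_0 < \infty\}$ under $\q$, the hitting time $s_n := \inf\{t : Y_t \leq 1/b_n\}$ strictly increases to $\sigma_0 < \infty$ by continuity of $Y$; since $a_n \to \infty$, eventually $T_n = s_n$ and $Y_{T_n} = 1/b_n$, so $S^{a,b}_n = b_n$ is eventually strictly increasing, placing the path in $A_{k,\infty}$ for every sufficiently large $k$. Because $A_{k,\infty}$ is nondecreasing in $k$, monotone convergence gives $\q[A_{k,\infty}] \uparrow \q[\bigcup_k A_{k,\infty}] \geq \q[\sigma_0 < \infty] > 0$, and $\{\sigma_0 < \infty\} \subseteq \{Y_\infty = 0\}$ then delivers $\q[A_{k,\infty} \cap \{Y_\infty = 0\}] > 0$ for $k$ large. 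I expect the main obstacle to be the Bayes-rule identification of $\E^\p[X_\infty \1_{A_{k,\infty}}]$: since $A_{k,\infty}$ is a tail event on which $\p$ and $\q$ need not agree, passing the $\p$-conditional probability through the change of measure and interchanging the $m \to \infty$ and $n \to \infty$ limits cleanly requires the conditional dominated convergence argument sketched above.
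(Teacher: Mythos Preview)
Your argument is correct and follows the same change-of-measure route as the paper: pass to the measure $\q$ under which $Y=1/X$ is a true martingale, translate the event $A_{k,n}$ into $\{Y_{T_k}\geq\cdots\geq Y_{T_n}\}$, and exploit that $X$ being a strict local $\p$-martingale forces $\q[\sigma_0<\infty]>0$.

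The main difference from the paper lies in how $k$ is selected and how much is computed. The paper fixes $k:=\min\{n\in\NN:\E^\p[X_0-X_{a_n}]>0\}=\min\{n:\q[Y_{a_n}=0]>0\}$ and then uses the following observation: on the event $\{Y_{a_k}=0\}$ one has $\sigma_0\leq a_k\leq a_m$ for all $m\geq k$, so each $\tau^{a,b}_m$ reduces to the hitting time $s_m$ of level $1/b_m$, giving $S^{a,b}_m=\max(b_m,x)$ which is nondecreasing in $m$. Hence $\{Y_{a_k}=0\}\subseteq A_{k,\infty}\cap\{Y_\infty=0\}$ directly, and the change-of-measure identity $\E^\p[S^{a,b}_n\1_{A_{k,n}}]=x\,\q[A_{k,n}]$ then yields the required strict inequality at once. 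Your approach instead derives the exact identity $\lim_n\E^\p[(S^{a,b}_n-S^{a,b}_\infty)\1_{A_{k,n}}]=x\,\q[A_{k,\infty}\cap\{Y_\infty=0\}]$ via the Bayes-type computation for $\E^\p[X_\infty\1_{A_{k,\infty}}]$, and then obtains positivity by a union argument over $k$. This is more work but gives a sharper statement; the paper's shortcut of working on $\{Y_{a_k}=0\}$ bypasses the Bayes step entirely.

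One small point: your justification ``$\lim_m Y_{T_m}=Y_\infty$ $\q$-a.s.\ via $T_m\to\sigma_0$'' only covers the case $\sigma_0<\infty$. When $\sigma_0=\infty$ and $Y_\infty>0$, continuity on compacts and convergence of $Y$ give $\inf_{t\geq 0}Y_t>0$, so eventually $s_m=\infty$ and $T_m=a_m\to\infty$; when $\sigma_0=\infty$ and $Y_\infty=0$, both $s_m$ and $a_m$ diverge. In all three cases $Y_{T_m}\to Y_\infty$ $\q$-a.s., so your conclusion is unaffected, but the stated reason should be expanded.
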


 \begin{proposition}
	\label{prop:tau alphabet}
	Let $X=(X_t)_{t\geq0}$ be a continuous-time positive strict local Markov martingale under the measure $\P_{x}$. Let $\alpha, \beta > 0$ and define the sequence of stopping times  $(\tau^{\alpha,\beta}_n)_{n \in \NN}$ by \eqref{eq:tau alphabet} and set $S^{\alpha,\beta}_n = X_{\tau^{\alpha,\beta}_n}$ for $n \in \NN_0$. Then the measure $\p_{x}$ is a bubble measure for the discrete time Markov martingale $S^{\alpha,\beta}=(S^{\alpha,\beta}_n)_{n\in\N_0}$.
\end{proposition}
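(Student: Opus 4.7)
My plan is to combine the Föllmer-type change of measure recalled just before the statement with the Markov structure of $S^{\alpha,\beta}$. The construction yields a probability $\q$ with $\q|_{\F_t}\gg\P_{x}|_{\F_t}$ for all $t\geq 0$ such that $Y:=1/X$ is a nonnegative $\q$-martingale and $\E^{\P_{x}}[X_0-X_t]=x\cdot\q[Y_t=0]$; setting $T_0:=\inf\{t\geq 0 : Y_t = 0\}$, the strict-local hypothesis translates into $\q[T_0<\infty]>0$. The strong Markov property of $X$ together with the time-homogeneous recipe in \eqref{eq:tau alphabet} makes $S^{\alpha,\beta}$ a Markov chain, so by the Markov observation in the remark following Definition~\ref{defbubble} it suffices to show $\E^{\P_{x}}[S^{\alpha,\beta}_{K_1}]<x$, where $K_1$ denotes the first discrete-time drawdown of $S^{\alpha,\beta}$.

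Next I would reformulate this in continuous time by setting $\tilde\tau_1:=\tau^{\alpha,\beta}_{K_1}$ on $\{K_1<\infty\}$ and $\tilde\tau_1:=\infty$ otherwise, with the convention $X_{\tilde\tau_1}:=\lim_{t\to\infty} X_t$ on $\{\tilde\tau_1=\infty\}$. Since $\tau^{\alpha,\beta}_n\to\infty$ $\P_{x}$-a.s., the identity $\E^{\P_{x}}[S^{\alpha,\beta}_{K_1}]=\E^{\P_{x}}[X_{\tilde\tau_1}]$ holds; for each $t>0$ the stopping time $\tilde\tau_1\wedge t$ is bounded, so the Föllmer identity together with $X_\sigma Y_\sigma=\1_{\{Y_\sigma>0\}}$ gives
\[
\E^{\P_{x}}[X_{\tilde\tau_1\wedge t}]=x\bigl(1-\q[Y_{\tilde\tau_1\wedge t}=0]\bigr)=x\bigl(1-\q[T_0\leq \tilde\tau_1\wedge t]\bigr).
\]
Letting $t\to\infty$, Fatou's lemma applied to the $\P_{x}$-a.s.\ convergence $X_{\tilde\tau_1\wedge t}\to X_{\tilde\tau_1}$ combined with monotone convergence on the right yields $\E^{\P_{x}}[X_{\tilde\tau_1}]\leq x\bigl(1-\q[T_0\leq \tilde\tau_1]\bigr)$, so the whole proof reduces to showing $\q[T_0\leq \tilde\tau_1]>0$.

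To secure this I would use the inclusion $\{T_0\leq\alpha\}\subset\{\tilde\tau_1=\infty\}\subset\{T_0\leq\tilde\tau_1\}$. Under $\q$, $X$ is continuous on $[0,T_0)$ and $X_t\to\infty$ as $t\uparrow T_0$; on $\{T_0\leq\alpha\}$ an induction on $k\geq 0$ then shows $\tau^{\alpha,\beta}_k<T_0$ and $X_{\tau^{\alpha,\beta}_k}=(1+\beta)^k x$. Indeed, given this at step $k-1$, the first hitting time $\sigma_k$ of $(1+\beta)X_{\tau^{\alpha,\beta}_{k-1}}$ after $\tau^{\alpha,\beta}_{k-1}$ lies in $[\tau^{\alpha,\beta}_{k-1},T_0)$ by continuity and blow-up of $X$, while $T_0\leq\alpha\leq\tau^{\alpha,\beta}_{k-1}+\alpha$ forces $\sigma_k<\tau^{\alpha,\beta}_{k-1}+\alpha$; hence the hitting event realises the infimum in \eqref{eq:tau alphabet}, $\tau^{\alpha,\beta}_k=\sigma_k<T_0$, and $X_{\tau^{\alpha,\beta}_k}=(1+\beta)^k x$ by continuity. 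Consequently $S^{\alpha,\beta}$ is strictly increasing on $\{T_0\leq\alpha\}$, so $\tilde\tau_1=\infty$ there and $\q[T_0\leq\tilde\tau_1]\geq\q[T_0\leq\alpha]$.

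The remaining and main obstacle is to show $\q[T_0\leq\alpha]>0$ (equivalently $\E^{\P_{x}}[X_\alpha]<x$) for the given $\alpha$. In the present continuous time-homogeneous Markov setting this is the genuine technical point: if $\E^{\P_{x}}[X_\alpha]=x$ then $X^{\alpha}$ would be a true $\P_{x}$-martingale, and a strong Markov argument iterating over $\alpha$-slabs together with continuity of the transition kernel would propagate this to $\E^{\P_{x}}[X_{n\alpha}]=x$ for every $n\in\NN$, contradicting $\q[T_0<\infty]>0$; should a direct propagation fail (because the equality need not hold from every starting value reached by $X_{n\alpha}$), the remedy is to replace $\{T_0\leq\alpha\}$ by the $\q$-positive subevent of $\{\tilde\tau_1=\infty\}$ obtained by first running the chain through $K$ consecutive hitting-type steps to reach the deterministic level $(1+\beta)^K x$ (each such step having positive $\q$-probability by local absolute continuity to $\P_{x}$) and then applying the previous induction started from $(1+\beta)^K x$, choosing $K$ large enough that the shifted hypothesis $\q_{(1+\beta)^K x}[T_0\leq\alpha]>0$ holds. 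Combining any such positivity statement with the previous displays gives $\E^{\P_{x}}[X_{\tilde\tau_1}]\leq x(1-\q[T_0\leq\tilde\tau_1])<x$, establishing the bubble property.
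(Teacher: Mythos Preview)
Your overall strategy—pass to the F\"ollmer measure $\q$, translate the mass defect into $\q[T_0\leq\tilde\tau]$, and exhibit a $\q$-positive event on which $S^{\alpha,\beta}$ never has a drawdown—is exactly the one the paper uses (via Theorem~\ref{thm:char:equivalent}) for Proposition~\ref{prop:tau ab}, which it then says is ``similar'' for Proposition~\ref{prop:tau alphabet}. The induction showing that on $\{T_0\leq\alpha\}$ every step is a hitting step and hence $S^{\alpha,\beta}_k=(1+\beta)^k x$ is correct and nicely done.

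The genuine gap is the final positivity statement $\q[T_0\leq\alpha]>0$. The strict local martingale hypothesis gives only $\q[T_0<\infty]>0$; it does \emph{not} give $\q[T_0\leq\alpha]>0$ for the specific $\alpha$ handed to you. Your first remedy (iterate over $\alpha$-slabs) fails for the reason you note: $\E^{\P_x}[X_\alpha]=x$ does not propagate because it need not hold from the random starting point $X_\alpha$. Your second remedy (first take $K$ hitting steps to reach $(1+\beta)^Kx$, then hope $\q_{(1+\beta)^Kx}[T_0\leq\alpha]>0$) is also not justified: you would need to know that from \emph{some} point on the geometric grid $(1+\beta)^Kx$ the explosion time is $\leq\alpha$ with positive $\q$-probability, and nothing in the hypotheses guarantees this without a comparison/coupling argument that holds only for one-dimensional diffusions, not for general continuous Markov martingales.

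The clean fix—also what makes the paper's approach work—is to drop the reduction to the \emph{first} drawdown and argue with a general $k$. On $\{T_0<\infty\}$ one has $\tau^{\alpha,\beta}_m<T_0$ for every $m$ (your induction), and in fact $\tau^{\alpha,\beta}_m\uparrow T_0$: if the limit were $L<T_0$ then eventually $\tau^{\alpha,\beta}_m-\tau^{\alpha,\beta}_{m-1}<\alpha$, forcing hitting steps and $S^{\alpha,\beta}_m\to\infty$, contradicting $X_L<\infty$. Hence for all large $m$ one has $\tau^{\alpha,\beta}_{m-1}>T_0-\alpha$, so the $m$-th step is a hitting step and $S^{\alpha,\beta}$ is eventually strictly increasing on $\{T_0<\infty\}$. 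Thus $\{T_0<\infty\}\subset\bigcup_k\{K_k=\infty\}$, and since $\q[T_0<\infty]>0$ there is some deterministic $k$ with $\q[T_0<\infty,\,K_k=\infty]>0$, whence $\q[T_0\leq\tilde\tau_k]>0$. Your own Fatou computation with $\tilde\tau_k$ in place of $\tilde\tau_1$ then yields $\E^{\P_x}[S^{\alpha,\beta}_{K_k}]\leq x(1-\q[T_0\leq\tilde\tau_k])<x$, which is the bubble property directly from Definition~\ref{defbubble}; no appeal to the Markov reduction of Remark~(b) is needed.
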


We only establish the proof of Proposition \ref{prop:tau ab}. The proof of Proposition \ref{prop:tau alphabet} is similar and left to the reader.
 
 \begin{proof}[Proof of Proposition \ref{prop:tau ab}]
Since $X$ is a local martingale with respect to its natural filtration and $(\tau^{a, b}_n)_{n \in \NN}$ is adapted to this filtration, we may assume without loss of generality that $X$ is the canonical process on $C([0, \infty),(0, \infty])$ with $X_0 = x > 0$. Set $\tau_\infty := \inf \{t \geq 0 : X_t = \infty\}$. Then $\tau^{a, b}_n < \tau_{\infty}$ for all $n \in \NN$.
 
	Then there exists a measure $\QQ$ on $C([0, \infty),(0, \infty])$, with $\q|_{\F_t}\gg\p|_{\F_t}$ for all $t\geq0$, such that $Y := 1/X$ is a nonnegative $\QQ$-martingale and $\EX[\P]{X_\tau \1_A} = x_0\cdot \QQ[A]$ for each bounded stopping time $\tau < \tau_{\infty}$ and $A \in \cF_\tau$. Let $k:=\min\{n\in\N: \E[X_0-X_{a_n}]>0\}$. Then for $n\geq k$, using that $\tau^{a,b}_n$ is bounded by $a_n$ and $\tau^{a,b}_n < \tau_{\infty}$, we obtain
 	Taking the limit as $n\to\infty$ on the left hand side, it follows that $\p$ is a bubble measure for $S$ by Theorem \ref{thm:char:equivalent}.
 	\end{proof}
 
 We proceed to illustrate  Proposition \ref{prop:tau alphabet} by an Example.
 
 \begin{example}
 	Let $(X_t)_{t\geq0}$ be the three-dimensional inverse Bessel process, i.e., $X$ is the unique strong solution to the SDE $\dd X_t=-X_t^2 \diff B_t$, where $B = (B_t)_{t \geq 0}$ is a $\p_x$-Brownian motion. The process $Y:=1/X$ is $\q_{1/x}$-Brownian motion, stopped when it reaches zero.
 	
 	Fix $\alpha, \beta > 0$. We proceed to calculate the Markov kernel $K(x, \dd y)$ for $S^{\alpha, \beta}$ under $\P_{x}$. Denote by $W$ a standard Brownian motion starting at zero and by $\Phi$ the cdf of standard normal random variable. Using the reflection principle of Brownian motion and denoting the running supremum and the running infimum of a process $Z$ by $\ol Z$ and $\ul Z$, respectively, we obtain
 \begin{align*}
&\p_x\left[S^{\alpha,\beta}_1 = (1+\beta)x \right] =\p_x\left[X_{\tau^{\alpha, \beta}_1}= (1+\beta)x\right] \\
&\qquad\qquad=x \E^\q_{1/x}\left[Y_{\tau^{\alpha,\beta}_1}\1_{\{Y_{\tau^{\alpha,\beta}_1}=\frac{1}{(1+\beta) x}\}}\right]=\frac{1}{1+\beta}\q_{1/x}\left[\ul {Y}_{\alpha} \leq \frac{1}{(1+\beta) x}\right] \\
&\qquad\qquad= \frac{1}{1+\beta}\q\left[\ol W_\alpha \geq \frac{1}{x} - \frac{1}{(1+\beta) x}\right] 
= \frac{2}{1+\beta} \Phi\left(- \frac{\beta}{(1+\beta) x \sqrt{\alpha}} \right).
	\end{align*}
Moreover, for $z \in (0, (1+\beta)x)$, using the joint density of Brownian motion and its running supremum, we obtain
	\begin{align*}
 		&\p_x\left[S^{\alpha,\beta}_1 \leq z \right] =\p_x\left[X_{\tau^{\alpha, \beta}_1} \leq z\right] =\p_x\left[X_\alpha \leq z, \ol X_\alpha < (1+\beta) x\right] \\
&=x \E^\q_{1/x}\left[Y_{\alpha}\1_{\{Y_{\alpha} \geq \frac{1}{z}\}}  \1_{\{\ul Y_{\alpha} > \frac{1}{(1+\beta)x}\}}\right]\\
 	&=x \E^\q \left[\left(\frac{1}{x} - W_\alpha\right)\1_{\{W_{\alpha} \leq \frac{1}{x} - \frac{1}{z}\}} \1_{\{\ol W_{\alpha} < \frac{1}{x} -\frac{1}{(1+\beta)x}\}} \right]\\
 	&= x \int_{-\infty}^{\frac{1}{x} - \frac{1}{z}} \left(\frac{1}{x} - y\right)\int_0^\frac{\beta}{(1+\beta)x}  \frac{2(2u-y)}{\sqrt{2\pi \alpha^3}}\exp\left(-\frac{(2u -y)^2}{2\alpha}\right)   \dd u \dd y \\
	&= x \int_{-\infty}^{\frac{1}{x} - \frac{1}{z}} \left(\frac{1}{x} - y\right)\int_{\frac{-y}{\sqrt{\alpha}}}^{\frac{\frac{2\beta}{(1+\beta)x}-y}{\sqrt{\alpha}}}  \frac{v}{\sqrt{2\pi \alpha}}\exp\left(-\frac{v^2}{2}\right)   \dd v \dd y \\
		&= x \int_{-\infty}^{\frac{1}{x} - \frac{1}{z}} \frac{1}{\sqrt{2\pi \alpha}}\left(\frac{1}{x} - y\right) \bigg(\exp\left(-\frac{y^2}{2 \alpha} \right) -\exp\bigg( -\frac{(\frac{2\beta}{(1+\beta)x}-y)^2}{2 \alpha} \bigg) \bigg) \dd y\\
 &= x \int_{0}^{z} \frac{1}{\sqrt{2\pi \alpha} w^3} \bigg(\exp\bigg(-\frac{(\frac{1}{x} - \frac{1}{w})^2}{2 \alpha} \bigg) -\exp\bigg( -\frac{(\frac{2\beta}{(1+\beta)x}-\frac{1}{x} + \frac{1}{w})^2}{2 \alpha} \bigg) \bigg) \dd w
\end{align*}
 
 Thus, the Markov kernel $K(x, \diff y)$ for $S^{\alpha, \beta}$ under $\P_{x}$ is given by
 	\begin{align*}
 	K(x, \diff y) &= \frac{2}{1+\beta} \Phi\left(- \frac{\beta}{(1+\beta) x\sqrt{\alpha}} \right) \delta_{(1+\beta) x}(\diff y) \\
	&+ \frac{x}{\sqrt{2\pi \alpha} y^3} \bigg(e^{-\frac{(\frac{1}{y} - \frac{1}{x})^2}{2 \alpha}}-e^{-\frac{(\frac{1}{y} +\frac{\beta-1}{(1+\beta)x} )^2}{2 \alpha} } \bigg)  \1_{(0, (1+\beta) x)}(y) \dd y.
	\end{align*}
 This ends the example.
 \end{example}
 
 We finish this section by providing a converse to Proposition \ref{prop:tau ab}.
\begin{theorem}
	\label{thm:slm:char}
 Let $X=(X_t)_{t\geq0}$ be a continuous positive local $\p$-martingale that converges to zero $\as{\P}$ Then $X$ is a strict local $\p$-martingale if and only if for all sequences $a=(a_n)_{n \in \NN}$ and $b=(b_n)_{n \NN}$ converging to infinity, the measure $\p$ is a bubble measure for the discrete time martingale $S^{a,b}=(S^{a,b}_n)_{n\in\N_0}$.
\end{theorem}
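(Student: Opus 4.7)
The forward implication is exactly Proposition \ref{prop:tau ab}. For the converse I argue by contrapositive: assuming $X$ is a true $\p$-martingale, I will construct sequences $a_n, b_n \to \infty$ for which $\p$ is not a bubble measure for $S^{a,b}$.

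Since $X$ is a positive $\p$-supermartingale converging to $0$ a.s., $\bar X := \sup_{t \geq 0} X_t$ is $\p$-a.s.\ finite. Choose $b_n$ growing so quickly that $\p[\bar X \geq b_n] \leq 2^{-n}$ and set $a_n = n$. By the Borel--Cantelli lemma the hitting times $\sigma_n := \inf\{t \geq 0 : X_t \geq b_n\}$ are $\p$-a.s.\ infinite for all $n$ sufficiently large, so $\tau^{a,b}_n = n$ eventually $\p$-a.s.\ and $S^{a,b}_n = X_n$ beyond a random index.

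Using the change of measure $\q$ recalled before Proposition \ref{prop:tau ab}, where $Y := 1/X$ is a nonnegative $\q$-martingale and the true-martingale hypothesis on $X$ reads $\q[Y_t = 0] = 0$ for every $t \geq 0$, the identity $\p[A] = x\,\E^\q[Y_\tau \1_A]$ for bounded stopping times $\tau$ and $A \in \cF_\tau$ gives, using $X_\tau Y_\tau = 1$ on $\{Y_\tau > 0\}$,
\[
\E^\p\!\left[S^{a,b}_n\,\1_{\{S^{a,b}_k \leq S^{a,b}_{k+1} \leq \cdots \leq S^{a,b}_n\}}\right] = x\,\q\!\left[Y_{\tau^{a,b}_k} \geq Y_{\tau^{a,b}_{k+1}} \geq \cdots \geq Y_{\tau^{a,b}_n}\right].
\]
Combining this with Theorem \ref{thm:char:equivalent}(c) and $X_\infty = 0$ $\p$-a.s., the process $S^{a,b}$ fails to be a bubble if and only if $\q[Y_{\tau^{a,b}_k} \geq Y_{\tau^{a,b}_{k+1}} \geq \cdots ] = 0$ for every $k \geq 0$, i.e., $Y$ is $\q$-a.s.\ not eventually monotone nonincreasing along the sampling times.

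The technical core of the proof is establishing this $\q$-a.s.\ oscillation property of the continuous $\q$-martingale $Y$. The plan is: (i) from $X_t \to 0$ $\p$-a.s.\ together with $\E^\p[X_t] = X_0 > 0$ deduce that $X$ fails to be $\p$-uniformly integrable, and hence by continuity $\langle X \rangle_\infty = \infty$ $\p$-a.s., which via $d\langle X \rangle = X^4\, d\langle Y \rangle$ yields $\langle Y \rangle_\infty = \infty$ $\p$-a.s.; (ii) transfer this quadratic-variation blow-up from $\p$ to $\q$ — the delicate step, since the measures are only locally equivalent, possibly requiring the sequence $(a_n)$ to be thinned so that $\langle Y \rangle$ accumulates strictly between consecutive samples $\q$-a.s.; (iii) conclude by a conditional Borel--Cantelli argument under $\q$ applied to the events $\{Y_{\tau^{a,b}_{n+1}} \geq Y_{\tau^{a,b}_n}\}$, showing they occur infinitely often $\q$-a.s.\ and thereby ruling out eventual monotonicity. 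The main obstacle is executing steps (ii) and (iii) rigorously; step (ii) in particular is subtle because $\{\langle Y \rangle_\infty = \infty\}$ is an $\F_\infty$-event where local equivalence of $\p$ and $\q$ need not transfer full measure.
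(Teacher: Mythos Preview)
Your overall architecture --- contrapositive, change of measure, reducing to $\q[Y_{\tau^{a,b}_k} \geq Y_{\tau^{a,b}_{k+1}} \geq \cdots]=0$ --- matches the paper. But the execution breaks down at two points, and the second is fatal.

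First, step (i) is incorrect as written: since $X$ is a continuous local martingale that converges $\p$-a.s., one has $\langle X\rangle_\infty<\infty$ $\p$-a.s., not $=\infty$. The conclusion $\langle Y\rangle_\infty=\infty$ $\p$-a.s.\ is nevertheless true, but for the direct reason that $Y=1/X\to\infty$ $\p$-a.s.

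Second, and decisively, step (ii) cannot be salvaged. Under $\q$, the process $Y$ is a \emph{positive} $\q$-martingale (because $X$ is a true $\p$-martingale), and one can check it converges to $0$ $\q$-a.s.; hence $\langle Y\rangle_\infty<\infty$ $\q$-a.s. So the event $\{\langle Y\rangle_\infty=\infty\}$ has full $\p$-measure and zero $\q$-measure: the two measures are mutually singular on $\cF_\infty$, and nothing about quadratic variation blow-up transfers. The same singularity explains why your choice of $(b_n)$ based on $\p$ gives no control under $\q$: since $Y\to 0$ $\q$-a.s., we have $X=1/Y\to\infty$ $\q$-a.s., so $\bar X=\infty$ $\q$-a.s.\ and $\q[\bar X\geq b_n]=1$ for every $n$.

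The paper avoids this by making all choices under $\q$ from the start. It first invokes an auxiliary result (Proposition~\ref{prop:mart:decreasing}) stating that any positive continuous local martingale that never becomes constant admits a deterministic sequence $(a_n)\uparrow\infty$ along which it is a.s.\ not eventually nonincreasing; this is applied to $Y$ under $\q$, which is legitimate because $Y\to 0$ $\q$-a.s.\ forces $Y$ never to become constant. Then $(b_n)$ is chosen so that $\q[\ul Y_{a_n}\leq 1/b_n]<2^{-n}$, whence Borel--Cantelli gives $\tau^{a,b}_n=a_n$ eventually $\q$-a.s. The decomposition
\[
\{Y_{\tau^{a,b}_k}\geq Y_{\tau^{a,b}_{k+1}}\geq\cdots\}\subset\{\ul Y_{a_\ell}\leq 1/b_\ell\text{ i.o.}\}\cup\{Y_{a_k}\geq Y_{a_{k+1}}\geq\cdots\}
\]
then yields the desired $\q$-nullity. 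The moral: once you pass to $\q$, stay there --- the oscillation you need is a $\q$-property of $Y$, not a $\p$-property of $X$.
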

 
 \begin{proof}
 First, assume that $X$ is a strict local martingale. The the result follows from Proposition \ref{prop:tau ab}

 Conversely, suppose that $X$ is a true $\P$-martingale. As in the proof of Proposition \ref{prop:tau ab}, we may assume without loss of generality that $X$ is the canonical process on $C([0, \infty),(0, \infty])$ with $X_0 = 1$. Then there exists a measure $\QQ$ on $C([0, \infty),(0, \infty])$, with $\q|_{\F_t}\gg\p|_{\F_t}$ for all $t\geq0$, such that $Y := 1/X$ is a positive $\QQ$-martingale that converges to $0$ $\QQ$-almost surely and $\EX[\p]{X_\tau \1_A} = \QQ[A]$ for each bounded stopping time $\tau < \tau_{\infty}$ and $A \in \cF_\tau$. 
 By Proposition \ref{prop:mart:decreasing}, there exists an increasing sequence $a=(a_n)_{n \in \NN}$ converging to infinity such that 
 for each $k \in \NN$,
 \begin{equation}
 	\label{eq:pf:thm:slm:char:a}
 \q[Y_{a_k} \geq Y_{a_{k+1}} \geq \cdots] = 0.
 \end{equation}
 Since $Y$ is positive $\as{\QQ}$, we can find an increasing sequence $b=(b_n)_{n \in \NN}$ converging to infinity such that $\q[\ul{Y}_{a_n}\leq1/b_n]<2^{-n}$ for all $n\in\N$. By the Borel--Cantelli lemma, this implies that
 \begin{equation}
 	 	\label{eq:pf:thm:slm:char:b}
\q\left[\ul{Y}_{a_k}\leq\frac{1}{b_k}\text{ for infinitely many }k\right]=0.
 \end{equation}
Then for any $k \in \NN$, using \eqref{eq:pf:thm:slm:char:a} and \eqref{eq:pf:thm:slm:char:b},and recalling that  each $\tau^{a,b}_n$ is bounded by $a_n$, we obtain
 \begin{align*}
&\lim_{n\to\infty}\E^\p\left[S^{a,b}_n\1_{\{S^{a,b}_k \leq S^{a,b}_{k+1} \leq \cdots \leq S^{a,b}_n\}} \right] = \lim_{n\to\infty} \q\left[Y_{\tau^{a,b}_n}\leq Y_{\tau^{a,b}_{n-1}}\leq \cdots\leq Y_{\tau^{a,b}_k}\right] \\
&\qquad=  \q\left[Y_{\tau^{a,b}_k}\geq Y_{\tau^{a,b}_{k +1}} \geq \cdots \right] \\
	&\qquad\leq\q\left[\ul{Y}_{a_\ell}\leq\frac{1}{b_\ell}\text{ for infinitely many }\ell\right] +\q\left[ Y_{a_\ell}\leq Y_{a_{\ell-1}}\text{ eventually}\right] = 0.
\end{align*}
 By Theorem \ref{thm:char:equivalent} , this shows that $\p$ is not a bubble measure for $S^{a,b}$.
\end{proof}

\appendix
\section{Auxiliary results}

\begin{proposition}
	\label{prop:mart:decreasing}
Let $(M_t)_{t \geq 0}$ be a  positive continuous local martingale that $\as{\p}$ never becomes constant, i.e.~$\p(M_t=M_\infty \text{ for all } t\geq s)=0$ for all $s\geq0$. Then there exists an increasing sequence of nonnegative real numbers $(a_k)_{k\in \NN}$ with $\lim_{k \to \infty} a_k = \infty$ such that for each $k \in \NN$,
\begin{equation*}
P[M_{a_k} \geq M_{a_{k+1}} \geq \cdots] = 0.
\end{equation*}
\end{proposition}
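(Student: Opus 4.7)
The plan is to construct the sequence $(a_k)$ inductively, using the Dambis--Dubins--Schwarz (DDS) representation of $M$ as a time-changed Brownian motion together with a (conditional) Borel--Cantelli argument. First I would reformulate the hypothesis: for a continuous local martingale, flatness of the quadratic variation on an interval is equivalent to constancy of the process, so ``$M$ never becomes constant'' is equivalent to $\langle M \rangle_s < \langle M \rangle_\infty$ almost surely for every $s \geq 0$. Since $M$ is positive and converges, DDS gives $M_t = B_{\langle M\rangle_t}$ for some Brownian motion $B$ (on a possibly extended probability space) started at $M_0$, and $\langle M\rangle_\infty < \infty$ a.s., for otherwise $B$ would fail to converge as $t \to \infty$.

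With these reformulations in hand, I set $a_1 := 0$ and, given $a_1 < \cdots < a_k$, choose $a_{k+1} > \max(a_k + 1,\, k+1)$ large enough so that
\[
\p\bigl[\langle M\rangle_{a_{k+1}} > \langle M\rangle_{a_k}\bigr] \geq 1 - 2^{-k}.
\]
This is possible because $\langle M\rangle_t \nearrow \langle M\rangle_\infty > \langle M\rangle_{a_k}$ a.s.~as $t \to \infty$, so the probability on the left tends to $1$. The first Borel--Cantelli lemma then guarantees that, almost surely, $\langle M\rangle_{a_{k+1}} > \langle M\rangle_{a_k}$ for all sufficiently large $k$. On this full-measure event, DDS together with the strong Markov property of $B$ at the stopping time $\langle M\rangle_{a_k}$ shows that the increment $M_{a_{k+1}} - M_{a_k}$ is a nontrivial Brownian value at a positive stopping time. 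A conditional estimate of the form $\p[M_{a_{k+1}} > M_{a_k}\mid \cF_{a_k}] \geq c > 0$ on an event of asymptotically full measure, combined with L\'evy's conditional Borel--Cantelli lemma, yields infinitely many strict increases along the sequence, which is precisely $\p[M_{a_k}\geq M_{a_{k+1}}\geq\cdots] = 0$ for every $k \in \NN$.

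The main obstacle is justifying the uniform lower bound on $\p[M_{a_{k+1}} > M_{a_k}\mid \cF_{a_k}]$. Brownian symmetry cannot be invoked naively, because the time-change increment $\langle M\rangle_{a_{k+1}} - \langle M\rangle_{a_k}$ is itself a stopping time for the post-$\langle M\rangle_{a_k}$ BM (determined self-consistently by the dynamics of $M$), and BM evaluated at an arbitrary stopping time need not have a symmetric distribution. One remedy is to dominate $\{M_{a_{k+1}} > M_{a_k}\}$ from below by the sub-event where the post-$a_k$ Brownian motion first moves upward by a prescribed small amount before the time-change increment is exhausted, which enjoys a uniform positive lower bound by a reflection argument. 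An alternative is to argue by contradiction: optional stopping applied to the discrete martingale $(M_{a_j})_{j \geq k}$ at the first upward index, combined with the fact that on the exceptional event $(M_{a_j})_{j\geq k}$ is a nonincreasing martingale converging to $M_\infty$ (and hence a.s.~constant on that event), leads, after a suitable spacing of the $a_k$, to a violation of the hypothesis via continuity of the paths of $M$.
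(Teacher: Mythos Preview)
Your DDS setup is correct, and you correctly identify the crux: a uniform (or at least non-summable) lower bound on $\p[M_{a_{k+1}}>M_{a_k}\mid\cF_{a_k}]$. However, neither of your two remedies closes the gap. For the first, the event ``the post-$\langle M\rangle_{a_k}$ Brownian motion first moves up by $\epsilon$'' does \emph{not} imply $M_{a_{k+1}}>M_{a_k}$, since the Brownian motion may well return below zero before the stopping time $\langle M\rangle_{a_{k+1}}-\langle M\rangle_{a_k}$ is reached; and because $\langle M\rangle_\infty<\infty$, these increments shrink to zero, so no reflection bound involving a fixed $\epsilon$ can be uniform. For the second, a martingale that is nonincreasing on an event of positive probability is \emph{not} forced to be constant on that event (the Remark following the Proposition gives an explicit example where $\p[M_{a_1}\geq M_{a_2}\geq\cdots]>0$), so the contradiction you sketch does not materialise without further input. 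In short, your inductive choice of $a_{k+1}$ only guarantees $\langle M\rangle_{a_{k+1}}>\langle M\rangle_{a_k}$ eventually, which is far too weak: it says the discrete increment is a.s.\ nonzero, but gives no handle on its sign.

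The paper's argument bypasses this obstacle entirely and is worth contrasting. Rather than controlling one-step conditional probabilities, it exploits path continuity directly: since a non-constant continuous local martingale is a.s.\ never eventually monotone, its restriction to any sufficiently fine grid on $[n,\infty)$ is a.s.\ not nonincreasing. One then chooses, for each $n$, a dyadic grid $D^n_{m_n}\subset[n,\infty)$ fine enough that $\p[M\text{ nonincreasing on }D^n_{m_n}]\leq 2^{-n}$, patches these grids together into a single locally finite set $D$, and enumerates $D$ in increasing order as $(a_k)$. The event $\{M_{a_k}\geq M_{a_{k+1}}\geq\cdots\}$ is then contained in $\{M\text{ nonincreasing on }D^{n'}_{m_{n'}}\}$ for every large $n'$, hence has probability zero. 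No DDS, no conditional Borel--Cantelli, no sign estimates are needed.
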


\begin{proof}
For $n, m \in \NN_0$, set $D^n_m := \{n + j 2^{-m}: j \in \NN_0\}$. Then for each fixed $n \in \NN_0$, $D^n_m$ is increasing in $m$. Set $D^n_\infty := \bigcup_{m\in \NN_0} D^n_m$, which is dense in $[n,\infty)$. Since $M$ is a continuous local martingale that $\as{\p}$  never becomes constant, its paths $\as{\p}$ never become monotone. By continuity of the paths of $M$, this implies that for each $n \in \NN_0$,
\begin{equation*}
\P[M_s \geq M_t \text{ for all }   s, t\in D^n_\infty \text{ with } s < t] = 0.
\end{equation*}
By $\sigma$-continuity of $\P$, for each $n \in \NN_0$, there exists $m_n \in \NN_0$ such that
\begin{equation*}
	\P[M_s \geq M_t \text{ for all }   s,t \in D^n_{m_n} \text{ with } s < t] \leq 2^{-n}.
\end{equation*}
We may assume without loss of generality that the sequence $(m_n)_{n \in \NN}$ is nondecreasing. Define the set $D$ by
\begin{equation*}
D := \bigcup_{n \in \NN_0} \left(D^n_{m_m} \cap [n, n+1)\right).
\end{equation*}
Then $D \cap [n, \infty) \supset D^{n'}_{m_{n'}}$ for all $n' \geq n$. This implies that
for each $k \in \NN_0$,
\begin{equation*}
	\P\left[M_s \geq M_t \text{ for all }   t, s \in D \cap [n, \infty) \text{ with } s < t\right] \leq\lim_{n' \to \infty} 2^{-n'} =0.
\end{equation*}
If $(a_k)_{k \in \NN}$ is an enumeration of $D$ in increasing order, the result follows.
\end{proof}

\begin{remark}
Note that in the situation of Proposition \ref{prop:mart:decreasing}, in general there also exist sequences $(a_k)_{k\in \NN}$ with $\lim_{k \to \infty} a_k = \infty$ such that
\begin{equation*}
	P[M_{a_1} \geq M_{a_2} \geq \cdots] > 0.
\end{equation*}
For example let $M_t = \exp(W_t - t/2)$, where $W$ is a Brownian motion. Define the sequence  $(a_k)_{k\in \NN}$ by $a_1 = 1$ and $a_k = a_{k-1} + k$. Then using that Brownian motion has independent and normally distributed increments, denoting the cdf of  a standard normal random variable by $\Phi$
, we obtain
\begin{equation*}
	\P[M_{a_1} \geq M_{a_2} \geq \cdots] = \prod_{k =1}^\infty \Phi\left(\frac{\sqrt{k}}{2}\right)  \geq  \prod_{k =1}^\infty\left (1 - \frac{1}{\sqrt{2\pi} \frac{\sqrt{k}}{2}} \exp \left(-\frac{1}{8} k\right)\right) > 0.
\end{equation*}
\end{remark}

\end{document}